\documentclass[twoside,11pt]{article}


\usepackage{ifthen}
\newboolean{ElectronicVersion}
\setboolean{ElectronicVersion}{true} 

\usepackage{amsmath,amstext,amsthm} 
\usepackage{subfig}
\usepackage{booktabs}
\usepackage{epstopdf}
\usepackage{microtype}
\usepackage{graphicx}

\usepackage[sort,round]{natbib}
\bibliographystyle{abbrvnat}

\usepackage{hyperref}
\usepackage[dvipsnames]{xcolor}
\hypersetup{colorlinks=true, allcolors=NavyBlue}

\usepackage[top=1in, bottom=1in, left=1in, right=1in]{geometry} 

\usepackage{algorithm}
\usepackage{algorithmicx}
\usepackage{algpseudocode}
\makeatletter
\newcommand{\algmargin}{\the\ALG@thistlm}
\makeatother
\newlength{\whilewidth}
\settowidth{\whilewidth}{\algorithmicwhile\ }
\algdef{SE}[parWHILE]{parWhile}{EndparWhile}[1]
  {\parbox[t]{\dimexpr\linewidth-\algmargin}{%
     \hangindent\whilewidth\strut\algorithmicwhile\ #1\ \algorithmicdo\strut}}{\algorithmicend\ \algorithmicwhile}%
\algnewcommand{\parState}[1]{\State%
  \parbox[t]{\dimexpr\linewidth-\algmargin}{\strut #1\strut}}

\setlength{\parskip}{\medskipamount}




\newtheorem{theorem}{Theorem}[section]
\newtheorem{proposition}{Proposition}[section]

\newtheorem{lemma}{Lemma}[section]



\renewcommand{\S}{\mathbf{S}}

\newcommand{\R}{\mathbf{R}}

\newcommand{\Z}{\bs{Z}}
\newcommand{\E}{\mathbf{E}}
\newcommand{\St}{\bs{\tilde S}}
\DeclareMathOperator*{\argmin}{\arg\min}
\DeclareMathOperator{\pr}{Pr}

\newcommand{\Diag}{\mathrm{Diag}}

\DeclareMathOperator{\rank}{{rank}}

\DeclareMathOperator{\tr}{{Tr}}


\renewcommand{\b}{{\bs{b}}}
\newcommand{\e}{\bs {e}}

\renewcommand{\P}{\bs{P}}

\newcommand{\bs}{\boldsymbol}

\renewcommand{\v}{\bs{v}}
\newcommand{\x}{\bs {x}}
\newcommand{\X}{\bs {X}}
\newcommand{\y}{\bs {y}}
\newcommand{\Y}{\bs {Y}}
\newcommand{\W}{\bs {W}}

\newcommand{\z}{\bs{z}}

\newcommand{\0}{\bs{0}}

\newcommand{\abs}[1]{\ensuremath{\left| #1  \right| }}
\newcommand{\vect}[1] {\ensuremath{\left(\begin{array}{c} #1 \end{array} \right)}} 
\newcommand{\mat}[1] {\ensuremath{ \left(\begin{array} #1 \end{array} \right)}} 
\newcommand{\branchdef}[1] {\ensuremath{ \left\{\begin{array}{rl} #1 \end{array} \right. }} 
\newcommand{\sbra}[1] {\ensuremath{ \left[ #1\right]}} 
\newcommand{\rbra}[1]{\ensuremath{\left( #1 \right)}} 
\newcommand{\bra}[1]{\ensuremath{\left\{ #1 \right\}}} 
\newcommand{\floor}[1]{\ensuremath{\left\lfloor #1 \right\rfloor}}

\newcommand{\half}{\frac{1}{2}}

\newcommand{\iid}{independent identically distributed (i.i.d.) }

\newcommand{\ra}{\rightarrow}

\newcommand{\eq}[1]{\(#1\)}

\newcommand{\kdc}{\(k\)-disjoint-clique }



\begin{document}

\title{Exact Clustering of Weighted Graphs via Semidefinite Programming}
\author{Aleksis Pirinen \thanks{Centre for Mathematical Sciences, Lund University, Lund, Sweden, aleksis@maths.lth.se  }
	\and
	 Brendan P.W.~Ames
	\thanks{ Department of Mathematics,
	The University of Alabama,
	Tuscaloosa, Alabama, AL 35487-0350, USA, bpames@ua.edu}
}

\maketitle
\begin{abstract}%
As a model problem for clustering, we consider the densest $k$-disjoint-clique problem of partitioning a weighted complete
graph into
$k$ disjoint subgraphs such that the sum of the densities of these subgraphs is maximized.
We establish that such subgraphs can be recovered from the solution of a particular
semidefinite relaxation with high probability if the input graph is sampled from a distribution of clusterable graphs.
Specifically, the semidefinite relaxation is exact if the graph
consists of \(k\) large disjoint subgraphs, corresponding to clusters, with weight concentrated within these subgraphs,
plus a moderate number of nodes not belonging to any cluster.
Further, we establish that if noise is weakly obscuring these clusters, i.e, the between-cluster edges are assigned
very small weights,
then we can recover significantly smaller clusters.
For example, we show that in approximately sparse graphs, where the between-cluster weights tend to zero as the size $n$ of the graph tends to infinity, we can recover clusters of size polylogarithmic in $n$
under certain conditions on the distribution of edge weights.
Empirical evidence from
numerical simulations is also provided to support these theoretical phase transitions to perfect recovery of the cluster structure.
\end{abstract}


\allowdisplaybreaks

\section{Introduction} \label{intro}
\emph{Clustering} is a fundamental problem in machine learning and statistics, focusing on the identification and classification
of groups, called \emph{clusters}, of similar items in a given data set. Clustering is ubiquitous, playing a prominent role
in varied fields such as computational biology, information retrieval, pattern recognition, image processing and computer vision,
and network analysis.
This problem is inherently ill-posed, as the partition or clustering of any given data set will depend heavily on how we
quantify similarity between items in the data set and how we characterize clusters; it is not outside the realm of possibility to
have two drastically different clusterings of the same data if two different similarity metrics are used in the clustering process.
Regardless of the similarity metric used, clustering is a combinatorial optimization
problem at its core: given data, identify a partition or labeling of the data (approximately) maximizing some
measure of quality of the clustering.
Due to the difficulties inherent with optimization over discrete sets, many popular approaches for clustering involve
the approximate solution of an NP-hard combinatorial optimization problem; for example, the spectral clustering heuristic
for the normalized cut problem \citep{dhillon2004kernel, ng2002spectral}, the convex relaxation
approaches for the correlation clustering problem \citep{mathieu2010correlation},
robust principal component analysis \citep{chen2014clustering, oymak2011finding},
and the densest $k$-disjoint-clique problem \citep{ames2014convex, ames2014guaranteed}, among many others.

In spite of the inherent intractability of clustering, 
many recent analyses have established that if
data is sampled from some distribution of clusterable data, then one can efficiently recover the underlying
cluster structure using a variety of clustering algorithms.
In particular, the recent results of \citet{ abbe2016exact,  ailon2013breaking, ames2014convex, ames2014guaranteed, amini2014semidefinite, cai2015robust, chen2014clustering, chen2014improved, chen2014statistical, guedon2015community, hajek2015achieving,
lei2015consistency,  mathieu2010correlation,nellore2013recovery, oymak2011finding,rohe2011spectral, qin2013regularized, vinayak2014sharp} all establish sufficient conditions  under which we can expect to identify the latent cluster structure efficiently.
Most of these results assume that the similarity structure of the data can be
modeled as a graph sampled from some
generalization of the
\emph{stochastic block model} proposed by \citet{holland1983stochastic}. In this model, the nodes of the graph, called the \emph{similarity graph} of the data, are associated with the items in the data set. An edge is drawn between two items with fixed probability $p$ if the corresponding items belong to the same cluster, and with fixed probability $q < p$ if the corresponding items belong to different clusters. 
Under this block model, the analyses cited above establish
 that the block structure of the data can be recovered in polynomial-time
 with high probability
provided that the smallest cluster in the data is sufficiently large, typically larger than \( \tilde c \sqrt{n} \), where \(n\)
denotes the number of items in the data (and nodes in the similarity graph) and \(\tilde c \) is a polylogarithmic factor in \(n\)
depending on \(p - q \).

Although valuable in establishing sufficient conditions for data to be clusterable, these results are not immediately
applicable to data sets seen in many applications, particularly those arising from the analysis of social networks.
For example, statistical analysis of social networks suggests that communities, playing the role of clusters,
tend to be limited in size to several hundred users, while the networks themselves can contain thousands, if not millions or even billions,
of users \citep{leskovec2008statistical, leskovec2009community}.
However, the recent analyses of \cite{chen2014clustering,chen2014statistical, guedon2015community,jalali2015relative, rohe2012highest}, among others, suggest that these clusterability results are overly
conservative with respect to the size of clusters we can expect to recover in polynomial-time.
Specifically, these analyses allow the edge probabilities $p$ and
$q$ to vary with $n$, and investigate how the size of the smallest cluster
that can be recovered depends on the relative scaling of $p,q$ and $n$.
In this case, the data is often assumed to be sampled from a \emph{sparse} generalized stochastic block model where the parameters
\(p\) and \(q\) governing edge formation are functions depending on the number of items \(n\) and one
or both tends to \(0\) as \(n \ra \infty\). In the case where \(p \) tends to \(0\) much more slowly than \(q\), the noise
obscuring the block structure is significantly weaker than in the dense graph case (where \(p\) and \(q\) are assumed fixed).
Here, sparsity refers to the fact that graphs
generated according to the block model contain very few edges
between
clusters with high probability when $n$ is large,
and not that the graph itself is sparse in the sense that the nodes
have small average degree.
In this 
case, it has been shown that clusters significantly smaller than
\(\sqrt{n}\) can be recovered efficiently;
specifically, several methods have been shown to recover clusters with size polylogarithmic in \(n\)
under certain assumptions on the probability functions \(p\) and \(q\)  \citep[see][]{chen2014clustering,chen2014statistical, guedon2015community,rohe2012highest}.
We should note that these results provide evidence of a computational limit for cluster recovery; that is, these results establish that clusters can be recovered in a computationally efficient way if the underlying data satisfies certain sufficient conditions. We should note further that the lower bounds on cluster size given by these sufficient conditions typically do not match information-theoretic limits; it is well-known that it is possible to identify clusters of size on the order of $\log n$ in certain settings, however, no polynomial-time algorithms are known to do so (\citealt{chen2014statistical, hajek2015achieving} provide further details).

The primary contribution of this paper is an analysis establishing similar clusterability results for a particular
convex relaxation of the clustering problem.
That is, we present an analysis establishing the following theorem, which provides conditions for perfect recovery of the underlying cluster structure from
the solution of a particular semidefinite program.
As an immediate corollary, the theorem establishes that
one may identify clusters
as small as \( \Omega(\log n)\),
i.e.,  there exists constant $c$ such that the size of the smallest cluster recoverable cluster is bounded below by \(c \log n \) for sufficiently large \(n\),
with high probability
if the data is sampled from the sparse block model described above
for particular choices of $p$ and $q$.
Here, we say that an event occurs \emph{with high probability (w.h.p.)} if the event occurs with probability
tending polynomially to $1$ as $n \ra \infty$.

\begin{theorem} \label{thm: simple}
	Suppose that the \(n\)-node graph \(G = (V,E)\) is sampled from the generalized stochastic block model,
	with \(k\) disjoint blocks, in-cluster edge probability \(p\), and between-cluster edge probability \(q\).
	Let \(\bs A \in \R^{n\times n}\) denote the adjacency matrix of \(G\)
	and let \(\hat r\) and \(\tilde r\) denote the cardinality of the smallest and largest clusters, respectively,
	in the block model for \(G\).
	Then there exists constants $c_1, c_2, c_3 > 0$ such that the columns of the optimal solution \(\X^*\) of
	the semidefinite program
	\begin{equation*} \label{eq: SDP}
	\max_{\X \in \Sigma_+^n} \bra{ \tr(\bs A \X) : \X \e \le \e, \tr(\X) = k, \X \ge \0 }
	\end{equation*}
	are scalar multiples of the characteristic vectors of the clusters in our underlying block model
	with high probability
	if
	\[
		p - q \ge c_3 \max \bra{ \sqrt{\frac{\tilde \sigma^2 \log n}{\hat r}}, \frac{\log n}{\hat r} },
	\]
	where $\tilde \sigma^2 = \max\{{p(1-p)}, q(1-q) \}$, and
	\begin{equation*}
		(p-q) \hat r \ge c_1 \max \bra{ \sqrt{ q(1-q) n },\sqrt{ \log n}  }
		+ c_2 \max \bra{ \sqrt{p(1-p) \tilde r }, \sqrt{\log n} }.
	\end{equation*}
	Moreover, in this case, every characteristic vector of a cluster in the block model is a
	scalar multiple of at least one column of \(\X^*\).
\end{theorem}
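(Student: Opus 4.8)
First I would establish a deterministic sufficient condition for exact recovery and then verify, under the two displayed hypotheses, that it holds with high probability. Write $C_1,\dots,C_k$ for the blocks, $r_i=|C_i|$, and let $\v_i\in\{0,1\}^n$ be the characteristic vector of $C_i$, so that $\hat r=\min_i r_i$ and $\tilde r=\max_i r_i$. The candidate optimal solution is $\X^*=\sum_{i=1}^k r_i^{-1}\v_i\v_i^T$, the orthogonal projector onto $\Span\{\v_1,\dots,\v_k\}$; it is feasible since $\X^*\e=\e$, $\tr(\X^*)=k$, $\X^*\ge\0$, and $\X^*\in\Sigma_+^n$. The deterministic claim I would use is: $\X^*$ is the \emph{unique} optimal solution of the SDP whenever there exist a scalar $\lambda$, a vector $\bs\mu\ge\0$, and a symmetric matrix $\bs Y\ge\0$ supported on the between-cluster blocks such that the Lagrangian slack matrix
\[
\bs S \;:=\; \lambda I + \tfrac12(\bs\mu\e^T+\e\bs\mu^T) - \bs A - \bs Y
\]
is positive semidefinite, $\bs S\v_i=\0$ for every $i$, $\ker\bs S=\Span\{\v_1,\dots,\v_k\}$, and each between-cluster block of $\bs Y$ has a strictly positive entry. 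This follows from KKT duality: complementary slackness forces any optimal $\X$ to have range inside $\ker\bs S=\Span\{\v_i\}$, hence to be block-constant, and then the condition on $\bs Y$ forces its between-cluster blocks to vanish, so $\X=\sum_i c_i\v_i\v_i^T$ with $c_i\ge0$; finally $\tr\X=k$ together with $\X\e\le\e$ forces $c_i=1/r_i$, i.e.\ $\X=\X^*$, and the two displayed conclusions of the theorem follow at once.

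The core of the argument is the construction of $(\lambda,\bs\mu,\bs Y)$. The requirement $\bs S\v_i=\0$ determines $\bs\mu$ once $\lambda$ is fixed: reading it on the diagonal block indexed by $C_i\times C_i$ and solving yields $\bs\mu|_{C_i}=\tfrac{2}{r_i}\d_i-\tfrac{1}{r_i}(\lambda+\bar d_i)\e$, where $\d_i$ is the vector of within-$C_i$ degrees and $\bar d_i$ their mean; reading it on the off-diagonal blocks prescribes the (mutually consistent) row and column sums of each block $\bs Y_{C_jC_i}$, so any nonnegative matrix realizing those marginals --- for instance the rank-one ``fitting'' matrix whose entries are (row sum)(column sum)$/$(total) within each block --- is admissible, provided the prescribed marginals are themselves nonnegative. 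Thus $\lambda$ is the only genuinely free parameter, and three things must hold for a suitable choice of it: (i) $\bs\mu\ge\0$, equivalently $2\d_i\ge(\lambda+\bar d_i)\e$ entrywise for all $i$; (ii) the prescribed $\bs Y$-marginals are nonnegative, and strictly positive somewhere in each block; and (iii) $\bs S$ is positive semidefinite with $\ker\bs S=\Span\{\v_i\}$, which --- since $\e\perp\Span\{\v_i\}^\perp$ makes the rank-two term drop out --- is equivalent to $\lambda I-\bs P^\perp(\bs A+\bs Y)\bs P^\perp$ being positive definite on the range of $\bs P^\perp:=I-\X^*$.

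Conditions (i) and (ii) are handled by Bernstein-type concentration of binomial degrees: uniformly over the $n$ nodes, each within-cluster degree lies within $O(\sqrt{p(1-p)\hat r\log n}+\log n)$ of $(r_i-1)p$, and the number of edges from a node to a cluster $C_i$ lies within $O(\sqrt{q(1-q)\hat r\log n}+\log n)$ of $r_i q$. Substituting into (i) requires $\lambda$ to be at most $\hat r p$ up to lower-order terms and, crucially, the per-node within-cluster deviation to be $o(\hat r(p-q))$; this is exactly the first displayed inequality, with $\tilde\sigma^2=\max\{p(1-p),q(1-q)\}$ and the $\log n/\hat r$ term the sparse-regime Bernstein correction. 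Substituting into the prescribed $\bs Y$-marginal, whose typical value in block $C_j\times C_i$ is $\approx r_i(p-q)-\tfrac{\lambda}{2}\cdot\tfrac{r_i+r_j}{r_j}$, shows (ii) holds once $\hat r(p-q)$ exceeds a constant multiple of $\lambda$ together with the lower-order degree fluctuations. For (iii) I would split $\bs A=\E[\bs A]+(\bs A-\E[\bs A])$ and $\bs Y=\E[\bs Y]+(\bs Y-\E[\bs Y])$; since $\bs P^\perp\v_i=\0$ and $\e\in\Span\{\v_i\}$, the projector annihilates the block-constant mean parts up to a negligible $O(p)$ diagonal term, so the condition reduces to $\lambda$ exceeding a constant multiple of $\|\bs A-\E[\bs A]\|+\|\bs Y-\E[\bs Y]\|$. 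The matrix Bernstein inequality bounds $\|\bs A-\E[\bs A]\|$, with high probability, by the within-cluster spectral noise $O(\sqrt{p(1-p)\tilde r}+\sqrt{\log n})$ plus the between-cluster spectral noise $O(\sqrt{q(1-q)n}+\sqrt{\log n})$, and $\bs Y-\E[\bs Y]$ is, within each block, essentially a rank-two perturbation driven by its fluctuating marginals and of the same order. Hence one can take $\lambda$ of order $\sqrt{q(1-q)n}+\sqrt{p(1-p)\tilde r}+\sqrt{\log n}$, and combining this with (ii) yields precisely the second displayed inequality, the $c_1$-term absorbing $\sqrt{q(1-q)n}$ and the $c_2$-term absorbing $\sqrt{p(1-p)\tilde r}$.

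The main obstacle is the tension in step (iii): $\lambda$ must be large enough to dominate the operator norm of the recentered adjacency-plus-$\bs Y$ noise on $\Span\{\v_i\}^\perp$, yet small enough to respect the ceiling $\approx\hat r p$ imposed by $\bs\mu\ge\0$ and the ceiling imposed by the $\bs Y$-marginals, and this squeeze is feasible precisely in the stated parameter range. The delicate technical points are: the sharp spectral-norm concentration of $\bs A-\E[\bs A]$ with the correct variance proxy $\tilde\sigma^2$, and its splitting after projection into the $\sqrt{p(1-p)\tilde r}$ and $\sqrt{q(1-q)n}$ contributions; the control of $\bs Y-\E[\bs Y]$, whose entries are correlated through the prescribed marginals; and --- in order to conclude that \emph{every} cluster characteristic vector is a scalar multiple of a column of $\X^*$ --- upgrading positive semidefiniteness of $\bs S$ to the strict statement $\ker\bs S=\Span\{\v_i\}$, i.e.\ a strict-complementarity estimate.
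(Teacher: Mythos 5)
Your proposal follows essentially the same route as the paper's: the same candidate primal solution \(\X^* = \sum_i \v_i\v_i^T/r_i\), the same KKT dual-certificate framework, the vector multiplier determined by solving the complementary-slackness equations on the diagonal blocks, the nonnegative multiplier on the off-diagonal blocks chosen to realize prescribed row and column sums (the paper parametrizes each block as a constant matrix plus \(\y^{q,s}\e^T + \e(\z^{q,s})^T\) where you use a rank-one product-of-marginals matrix --- a cosmetic difference, though it shifts the nonnegativity check from bounding \(\|\y^{q,s}\|_\infty + \|\z^{q,s}\|_\infty\) to checking the marginals directly), Bernstein bounds on degree sums for entrywise nonnegativity, and a spectral-norm bound on the centered noise for positive semidefiniteness. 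Your uniqueness argument (strictly positive entries of \(\bs Y\) in every between-cluster block forcing the off-diagonal blocks of any alternative optimum to vanish) is a slightly different and clean route; the paper instead invokes the block-density inequality \(r_s\e^T\W_{C_q,C_q}\e > r_q\e^T\W_{C_q,C_s}\e\) of Proposition~\ref{prop:unique}.

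The one step that would fail as written is the spectral bound in your condition (iii). The matrix Bernstein inequality does \emph{not} give \(\|\bs A-\E[\bs A]\| = O\rbra{\max\{\sqrt{q(1-q)n},\sqrt{\log n}\}+\max\{\sqrt{p(1-p)\tilde r},\sqrt{\log n}\}}\); for a symmetric matrix with independent centered entries bounded by \(1\) and variance \(\sigma^2\) it gives \(O\rbra{\sqrt{\sigma^2 n\log n}+\log n}\), which carries an extra \(\sqrt{\log n}\) factor. With that weaker bound the second displayed hypothesis would have to read \((p-q)\hat r \gtrsim \sqrt{q(1-q)n\log n}+\cdots\), which is not the statement being proved and which degrades the dense-case threshold from \(\hat r = \Omega(\sqrt n)\) to \(\hat r = \Omega(\sqrt{n\log n})\). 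To obtain the log-free form one needs a sharper spectral concentration result for random symmetric matrices with independent bounded entries; the paper uses Corollary 3.11 of \citet{bandeira2016sharp}, specialized as Lemma~\ref{lem: tropp tech}, which yields \(\|\bs A - \E[\bs A]\| \le C\max\{\sigma\sqrt n, \sqrt{\log n}\}\) w.h.p. With that substitution, the rest of your outline matches the paper's argument (which proves the general weighted Theorem~\ref{thm: rec} and specializes \(\alpha=p\), \(\beta=q\), \(\sigma_1^2=p(1-p)\), \(\sigma_2^2=q(1-q)\), \(r_{k+1}=0\)).
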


Here, the characteristic vector of a set
\(S \subseteq \{1,2,\dots, n\}\) is the vector \( \x \in \{0,1\}^n \) 
with \(i\)th element
\[
x_i = \branchdef{ 1, & \mbox{if } i \in S \\ 0, &\mbox{otherwise.}}
\]
In Theorem~\ref{thm: simple}, \(\tr(\X) \) denotes the trace of the matrix \(\X\),
\(\e\) denotes the all-ones vector of appropriate dimension,
the notation \(\X \ge \0\) indicates that the entries of \(\X\) are nonnegative,
and \(\Sigma_+^n\) denotes the cone of \(n\times n\) symmetric positive semidefinite matrices.

Note that if \(G\) is sampled from the \emph{dense} block model, i.e., \(p,q\) are independent of \(n\), then Theorem~\ref{thm: simple} suggests that we have exact recovery if \( \hat r \ge c \sqrt{n}\) with high probability, where \(c\) is a constant
depending on \(p, q\); this bound matches that established by \cite{ames2014guaranteed} (among many others) up to
constant terms.
On the other hand, when \(G\) is sampled from the \emph{sparse} block model,
we see that Theorem~\ref{thm: simple} suggests that we may have perfect recovery of significantly smaller clusters.
For example, suppose that \(p=1\) is fixed and \(q = \log n/ n\).
Then we have exact recovery with high probability if
the smallest cluster has size \(\hat r = \Omega( \log n)\);
see the discussion following Theorem~\ref{thm: rec}.


We will show that analogous phenomena occur in what we will call
\emph{approximately} sparse graphs.
In many practical applications, the expectation that we have a binary labeling
indicating whether any pair of items in a given data set are similar
or dissimilar is unrealistic. However, it is often possible to
describe the \emph{level} of similarity between any two items using some affinity
function based on distance between the items in question.
For example, we could consider the discrepancy in pixel intensity and geographic
location in image segmentation applications or Euclidean distance between
two items represented as vectors in a Euclidean space (or some other vector
space with corresponding norm).
In this case, we can summarize the pairwise similarity relationships within our
data using
a weighted graph, called a \emph{weighted similarity graph}.
Specifically, given a data set with affinity function \(f\),
the weighted similarity graph is the weighted complete graph with nodes
corresponding to the items in the data set, and edge weight \(w_{ij}\)
between nodes \(i\) and \(j\) given by the value of \(f(i,j)\).
Clearly, this contains the similarity graphs discussed earlier as a
special case where \(w_{ij} = 1\) if items \(i\) and \(j\) are known
to be similar and \(w_{ij} = 0\) otherwise; note that we assume that we have an undirected graph with
symmetric adjacency matrix.

We can generalize 
the stochastic block model in an identical fashion.
We assume that items in the same cluster are significantly
more similar than pairs of items in different clusters.
This corresponds to edge weights within clusters being larger, on average,
than edge weights between clusters.
This motivates the following random graph model,
which we will call the \emph{planted cluster model}.
Let \(G = (V,\W)\) be the weighted complete graph whose node set
represents the items in some data set containing \(k\) clusters and (potentially)
some nodes that will not be assigned to a cluster.
For each pair of nodes \(u,v \) in the same cluster $C_\ell$,
$\ell \in \{1,2,\dots, k\},$
we randomly sample
edge weight \(w_{uv} \ge 0\), and \(w_{vu}\) by symmetry,
from some probability distribution \(\Omega_\ell\)
with mean \(\alpha_\ell \ge \alpha > 0\). If \(u \in C_i\),
\(v \in C_j\), where $i \neq j$, i.e., $u,v$ do not belong to the same
cluster, we sample \(w_{uv} = w_{vu} \ge 0\) from a different probability distribution
\(\Omega_{ij}\) with mean \( \beta_{ij} \le \beta \in [0,\alpha) \).
Note that this model contains the generalized stochastic block model discussed
earlier as a special case when \(\Omega_\ell \) and \(\Omega_{ij}\)
are Bernoulli distributions with probabilities of success
\(p_\ell = p\) and \(q_{ij} = q\), respectively.

It was shown by \cite{ames2014guaranteed} that if \(G = (V, \W)\) is sampled
from the
planted cluster model
with minimum cluster
size at least \(c\sqrt{n}\) in the homogeneous
case where all within-cluster edges are i.i.d.~with mean $\alpha$
and all between-cluster edges are i.i.d.~with mean $\beta$, where \(c\) is a constant
depending on \(\alpha\) and \(\beta\), then we can recover the clusters
from the optimal solution of the semidefinite program
\begin{equation} \label{eq: cluster SDP}
\max_{\X \in \Sigma_+^n} \Big\{ \tr(\W \X): \X \e \le \e,
\tr(\X) = k, \X \ge \0 \Big\}
\end{equation}
with high probability, where \(k \) is the number of clusters in the graph.
We will show that these results can be strengthened to establish that much
smaller clusters can be recovered in the presence of \emph{approximately}
sparse noise. That is, we will
see that if the between-cluster edge weights have expectation
\(\beta\) and variance \(\sigma_2^2\) approaching zero  sufficiently quickly as \(n \ra \infty\), then we may recover
clusters containing as few as \(\Omega(\log n)\) nodes with high probability. 
We will derive the semidefinite program \eqref{eq: cluster SDP}
as a relaxation of a particular model problem for clustering in Section~\ref{sec: kdc}
and formally state our recovery guarantees  in Section~\ref{sec: rec};
we will see that these results immediately specialize to those stated in Theorem~\ref{thm: simple}
for the semidefinite program~\eqref{eq: SDP}.

\section{Semidefinite Relaxations of the Densest k-Disjoint Clique Problem}

In this section, we derive a semidefinite relaxation for the densest \(k\)-disjoint clique problem
and present an analysis illustrating a sufficient condition ensuring that this relaxation
is exact. This problem will act as a model problem for clustering and we will see that
we should expect
to accurately recover the underlying cluster structure if the given data
satisfies this sufficient condition.

\subsection{The Densest k-disjoint Clique Problem}
\label{sec: kdc}
We begin by deriving a heuristic for the clustering problem based on semidefinite relaxation
of the densest disjoint clique problem.
A similar discussion motivating the relaxation was originally presented by \cite{ames2014guaranteed}; we repeat it here for completeness.
Let $K_n=(V, \W)$ be a weighted complete graph with vertex set
$V=\{1,2,\dots,n\}$ and nonnegative edge weights $w_{ij} \in [0,1]$ for all $i,j \in V$.
Given a subgraph \(H\) of \(K_n\), the \emph{density} \(d_H\) of \(H\)
is the average edge weight incident at a vertex in \(H\):
\[
d_H = \sum_{ij \in E(H)} \frac{w_{ij}}{|V(H)|}.
\]
If we assume that \(K_n\) is the similarity graph of some data set consisting
of \(k\) disjoint clusters and that weight
is concentrated more heavily on within-cluster edges than between-cluster
edges, then we may cluster this data set by finding the set of \(k\) disjoint
subgraphs, corresponding to these clusters, with maximum density;
we call this problem the \emph{densest \(k\)-partition problem}.
\citet{peng2007approximating} established that the {densest \(k\)-partition problem is NP-hard. Moreover, this partition model excludes the
	case where some items in the data set do not naturally associate
	with any of the clusters in the data.
	To simultaneously motivate a convex relaxation of the densest \(k\)-partition
	problem and address the inclusion of nodes that do not naturally belong to clusters, we consider the \emph{densest
		\(k\)-disjoint clique problem}.

	Given a graph \(G=(V,E)\), a \emph{clique} of \(G\) is a pairwise
	adjacent subset of \(V\). That is, \(C \subseteq V\)
	is a clique of \(G\) if \(ij\in E\) for every pair of nodes \(i,j \in C\)
	or, equivalently, the subgraph \(G(C)\) induced by \(C\) is
	{complete}.
	We say that \(H\) is a \emph{\(k\)-disjoint-clique} subgraph of \(K_n\)
	if \(V(H)\) consists of \(k\) disjoint cliques, i.e., \(H\) is the union
	of \(k\) disjoint complete subgraphs of \(K_n\).
	The \emph{densest \(k\)-disjoint-clique problem} seeks a \(k\)-disjoint-clique
	subgraph \(H^*\) maximizing the sum of the densities of the disjoint
	complete subgraphs comprising \(H^*\).
	Note that if we add the additional constraint that each node in \(K_n\) belongs
	to exactly one \kdc subgraph in \(K_n\), then
	the densest \kdc problem becomes the densest \(k\)-partition problem.
	However, in general, the densest \kdc problem allows an assignment of nodes
	to clusters, represented by the disjoint cliques, which excludes
	some nodes. For example, if such nodes are present in the data, they would not be assigned to a
	cluster by the optimal \kdc subgraph.

	The complexity of the densest \kdc problem is unknown;
	in particular, no polynomial-time algorithm for its solution is known.
	To address this potential intractability, we will attempt to approximately solve the \kdc problem by convex
	relaxation.
	Suppose that \( \v_1, \dots, \v_k \) are the characteristic vectors
	of a set of disjoint cliques \(C_1, C_2, \dots, C_k\)
	forming a \kdc subgraph of \(K_n\).
	Using this notation, the density of the complete subgraph induced
	by \(C_i\) is equal to
	\[
	d_{G(C_i)} = \sum_{u, v \in C_{i}} \frac{w_{uv}}{|C_i|}
	= \frac{\v_i^T \W \v_i}{\v_i^T \v_i}.
	\]
	If we let \(\P\) be the \(n \times k\) matrix with \(i\)th column equal to \(\v_i/ \|\v_i\|\), where \(\|\cdot\| = \|\cdot\|_2\)
	denotes the standard Euclidean norm, then
	it is easy to see that
	\[
	\sum_{i=1}^k d_{G(C_i)} = \tr(\P^T \W \P).
	\]
	We call such a matrix \(\P\) a \emph{normalized \(k\)-cluster matrix} and denote the set of normalized \(k\)-cluster
	matrices of the vertex set \(V\) by \(ncm(V,k)\).
	It follows that the densest \kdc problem may be formulated as
	\begin{equation} \label{eq: kdc prob}
	\max \bra{ \tr(\P^T \W \P) : \P \in ncm(V,k) }.
	\end{equation}
	Again, the complexity of \eqref{eq: kdc prob} is unknown, however, the maximization of quadratic functions
	subject to combinatorial constraints is known to be NP-hard.

	 A process for relaxation of \eqref{eq: kdc prob} using matrix lifting is described by \cite{ames2014guaranteed};
	a similar relaxation technique was applied by
	\cite{ames2011nuclear, ames2014convex}
	and \cite{ames2015guaranteed}.
	In particular, each proposed cluster \(C_i\), with characteristic vector \(\v_i\), corresponds to the rank-one symmetric matrix
	\[
	\X^{(i)} =  \frac{ \v_i \v_i^T}{ \v_i^T \v_i}.
	\]
	It is easy to see that the density of \(G(C_i)\) is equal to
	\[
	d_{G(C_i)}= \frac{ \v_i^T \W \v_i }{\v_i^T \v_i }  = \tr( \W \X^{(i)}) .
	\]
	Moreover, each of the matrices \(\X^{(i)}\) has row and column sums equal to either 0 or 1, and trace equal to 1.
	Finally,
	for each proposed clustering \(C_1, \dots, C_k\), the corresponding rank-one matrices are orthogonal in the trace inner
	product, due to the orthogonality of the characteristic vectors of the corresponding disjoint clusters.
	Thus, the matrix
	\begin{equation} \label{eq: cluster X}
	\X = \sum_{i=1}^k \X^{(i)} = \sum_{i=1}^k \frac{ \v_i \v_i^T}{ \v_i^T \v_i}
	\end{equation}
	has rank equal to \(k\).
	This suggests that we may relax \eqref{eq: kdc prob} as the rank-constrained semidefinite program
	\begin{equation} \label{eq: kdc rank sdp}
	\max_{\X \in \Sigma^n_+} \bra{ \tr(\W\X): \X \e \le \e, \rank \X = k, \tr \X = k, \X \ge \0 }.
	\end{equation}
	The relaxation \eqref{eq: kdc rank sdp} can be relaxed further to a semidefinite program
	by omitting the nonconvex rank constraint:
	\begin{equation} \label{eq: kdc sdp}
	\max_{\X \in \Sigma^n_+} \bra{ \tr(\W\X): \X \e \le \e, \tr \X = k, \X \ge \0}.
	\end{equation}
	We should note that the semidefinite program \eqref{eq: kdc sdp} is remarkably similar to the
	semidefinite relaxation of the minimum sum of squared distance partition of \cite{peng2007approximating}
	and the semidefinite relaxation of the maximum likelihood estimate of
	the stochastic block model considered by \cite{amini2014semidefinite}, among others,
	although our relaxation approach differs slightly from that used in these two papers.

\subsection{Block Models and Recovery Guarantees}
\label{sec: rec}
Given a set of {clusterable} data or, more accurately, a {clusterable} graph representation of data, \cite{ames2014guaranteed} established that one can recover the underlying cluster structure from the optimal solution of the semidefinite program \eqref{eq: kdc sdp}.
Specifically, it is assumed that data with strong cluster structure should correspond to similarity graphs
with heavy weight assigned to edges within clusters, relative to that between cluster edges.
This corresponds to pairs of items within clusters being significantly more similar than pairs of items in
different clusters. This motivates the following block model.

Let \(H^*\) be a \(k\)-disjoint-clique subgraph of \(K_n = (V,\W)\) with vertex set composed
of the disjoint cliques \(C_1, \dots, C_k\) and
let \(\Sigma^n\) denote the set of all \(n\times n\) symmetric matrices.
We consider weight matrices \(\W = [w_{ij}] \in \Sigma^n\)
with entries sampled independently from one of two probability distributions \(\Omega_1, \Omega_2\)
as follows. 
\begin{itemize}
	\item
	For each \(i=1, \dots, k\) and each \(u,v \in C_i\),
	we sample \(w_{uv} = w_{vu}\) from a distribution \(\Omega_1\) such that
	\[
	\E[w_{uv}] = \E [w_{vu}] = \alpha, \hspace{0.25in} 0 \le w_{uv} \le 1,
	\]
	for fixed \(\alpha \in (0,1]\).
	\item
	For each remaining 	edge \(uv\),
	\eq{u\in C_i, v \in C_j}, we sample the edge weight \(w_{uv} = w_{vu}\)
	from a second distribution \(\Omega_2\) such that
	\[
	\E[w_{uv}]  = \E[w_{vu}] = \beta, \hspace{0.25in} 0 \le w_{uv} \le 1,
	\]
	for fixed \(\beta \in [0, \alpha) \)
	if ${1 \le i, j \le k}$ or $i=j = k+1$,
	and \(\E[w_{uv}] = \beta/2\) otherwise.
\end{itemize}

We should note that the assumption that the entries of \(\W\) are bounded between \(0\) and
\(1\) is made for simplicity in the statement and proof of our main result;
analogous recovery guarantees hold if we assume that random variables
sampled according to \(\Omega_1\) and \(\Omega_2\)
are bounded and nonnegative with high probability.
We say that such random matrices \(\W\) are sampled from the \emph{planted cluster model}.
Note that if \(\W\) is sampled from the planted cluster model, then weight is concentrated
on within-cluster edges (in expectation).
This provides a natural generalization of the stochastic block model.
Indeed, the stochastic block model corresponds to the planted cluster model
in the special case that \(\Omega_1\) and \(\Omega_2\) are Bernoulli distributions with probabilities of
success \(p\) and \(q\), respectively.
\cite{ames2014guaranteed} established the following theorem, ensuring recovery of the planted cliques
\(C_1, \dots, C_k\) from the optimal solution of \eqref{eq: kdc sdp} under the planted cluster model \citep[see][Theorem 2.1]{ames2014guaranteed}.

\begin{theorem}
	\label{thm: old rec}
	Suppose that the vertex sets \(C_1, \dots, C_k\) define a \kdc subgraph \(H^*\) of the
	\(n\)-node weighted complete graph \(K_n(V, \W)\) and let \(C_{k+1} := V \setminus \rbra{ \cup^k_{i=1} C_i } \).
	Let \(r_i := |C_i|\) for all \(i=1,\dots, k+1\)
	and let \( \hat r = \min_{i=1,\dots,k} r_i.\)
	Let \( \W \in \Sigma^n\) be a random symmetric matrix sampled from the planted cluster model
	according to distributions \(\Omega_1\) and \(\Omega_2\) with means \(\alpha\) and \( \beta\), respectively,	satisfying
	\[
	\gamma = \gamma(\alpha, \beta, r):=
	\alpha - \beta > 0.
	\]
	Let \(\X^*\) be the feasible solution of \eqref{eq: kdc sdp} corresponding to \(C_1,\dots, C_k\)
	defined by \eqref{eq: cluster X}.
	Then there exist scalars \(c_1, c_2, c_3 > 0\) such that if
	\begin{equation*} \label{eq: old rec}
	c_1 \sqrt{n} + c_2 \sqrt{k r_{k+1}} + c_3 r_{k+1} \le \gamma \hat r,
\end{equation*}
	then \( \X^* \) is the unique optimal solution of \eqref{eq: kdc sdp},
	and \( H^*\) is the unique maximum density \kdc subgraph of \(K_n\)
	with probability tending exponentially to \(1\) as \(\hat r \ra \infty\).
\end{theorem}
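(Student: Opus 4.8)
The plan is to prove optimality and uniqueness of the planted solution $\X^*$ by exhibiting a Lagrange (dual) certificate, and then to show that this certificate exists with the stated probability by controlling the fluctuations of $\W$ about its mean $\bar\W := \E[\W]$ with exponential concentration inequalities. Associate a scalar $\lambda$ with $\tr\X = k$, a nonnegative vector $\mu$ with $\X\e \le \e$, a nonnegative symmetric matrix $\eta$ with $\X \ge \0$, and a positive semidefinite matrix $S$ with the conic constraint $\X \in \Sigma^n_+$. Lagrangian stationarity for \eqref{eq: kdc sdp} reads
\[
S = \lambda I + \tfrac12\bbra{\mu\e^T + \e\mu^T} - \eta - \W,
\]
and $\X^*$ is optimal provided such multipliers exist with $S\X^* = \0$, $\mu_u\,(\e_u^T\X^*\e - 1) = 0$ for all $u$, and $\eta_{uv}\X^*_{uv} = 0$ for all $u,v$. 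Since $\X^* = \sum_{i=1}^k \v_i\v_i^T/r_i$ has range $\Span\{\v_1,\dots,\v_k\}$, row sums equal to $1$ on $\cup_{i\le k}C_i$ and $0$ on $C_{k+1}$, and vanishing entries outside the within-cluster blocks, complementary slackness forces $S\v_i = \0$ for every $i$, $\mu_u = 0$ for every outlier $u \in C_{k+1}$, and $\eta_{uv} = 0$ whenever $u,v$ lie in a common cluster.

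Next I would construct the certificate explicitly. Writing $\W = \bar\W + R$ with $R$ a zero-mean bounded random matrix, the candidate multipliers are obtained by solving the stationarity/complementary-slackness equations in the "noiseless" block-structured case and then keeping the resulting formulas: $\lambda$ is taken of the order of the average within-cluster weight, $\mu$ is chosen on each cluster $C_i$ precisely so that the $\v_i$-components of $S$ vanish, and $\eta$ (and the outlier block of $S$) is chosen to absorb the small $(\approx \beta)$ between-cluster weights. All equality and complementary-slackness relations then hold by construction, so that optimality reduces to verifying feasibility of the dual certificate: $\mu \ge \0$, $\eta \ge \0$, and $S \succeq \0$.

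The crux is the semidefiniteness $S \succeq \0$. By construction $S$ annihilates $\Span\{\v_i\}$, so it suffices to show $P^\perp S\, P^\perp \succ \0$, where $P^\perp$ projects onto the orthogonal complement of $\Span\{\v_i\}$. Splitting $S$ into its expected part, which contributes a spectral gap of order $\gamma\hat r = (\alpha-\beta)\hat r$ on that subspace coming from the disjoint-clique block structure, plus correction terms built from $R$, the inequality reduces to bounding $\|R\|_2$, $\|P^\perp R\,P^\perp\|_2$, the block row-sum norms $\|R_{C_i,C_j}\e\|$, the quadratic forms $\v_i^T R\v_j$, and the contribution of the size-$r_{k+1}$ outlier block. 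A matrix Bernstein / Ahlswede--Winter bound gives $\|R\|_2 = O(\sqrt n)$ with failure probability exponentially small in $n$; scalar Bernstein/Hoeffding bounds handle the row-sum and quadratic-form terms with failure probability exponentially small in $\hat r$; and the coupling of the outlier block with the clusters produces the $O(\sqrt{k r_{k+1}})$ term while the requirement that $\lambda$ dominate the outlier block $\W_{C_{k+1},C_{k+1}}$ produces the $O(r_{k+1})$ term. Demanding that the structural gap $\gamma\hat r$ dominate all of these contributions is exactly hypothesis \eqref{eq: old rec}; the same bounds, with room to spare, yield $\eta \ge \0$ and $\mu \ge \0$. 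A union bound over the $O(k^2)$ blocks and $O(n)$ coordinates, each failing with probability exponentially small in $\hat r$, gives the asserted probability.

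Finally, for uniqueness I would upgrade to a strict certificate: show $\rank S = n-k$ exactly, i.e.\ $P^\perp S\, P^\perp$ is strictly positive definite, which forces the range of any optimal solution to equal $\Span\{\v_i\}$, and check the complementary-slackness conditions are non-degenerate so that, together with the equality and nonnegativity constraints, $\X = \X^*$ is pinned down; uniqueness of $H^*$ as the densest $k$-disjoint-clique subgraph then follows because any competing $k$-disjoint-clique subgraph induces a feasible point of \eqref{eq: kdc sdp} of no smaller objective value. I expect the third step to be the main obstacle: getting the spectral lower bound on $S$ with sharp enough constants that the threshold comes out at $\gamma\hat r \gtrsim \sqrt n$ (rather than a weaker polynomial in $n$) requires treating the rank-one directions $\v_i$, the bulk noise on $P^\perp$, and the outlier block $C_{k+1}$ separately and combining them without loss.
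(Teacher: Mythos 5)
Your proposal is correct and takes essentially the same route as the paper: a dual (KKT) certificate built by solving the complementary-slackness equations block-by-block, with nonnegativity of the multipliers and the spectral bound on the noise part of $\S$ (the $O(\sqrt{n})$, $O(\sqrt{k\,r_{k+1}})$, and $O(r_{k+1})$ contributions) established by Bernstein-type and matrix concentration inequalities, and uniqueness via the rank/strict-complementarity argument. Note only that the paper imports Theorem~\ref{thm: old rec} from \citet{ames2014guaranteed} and instead proves the strengthened Theorem~\ref{thm: rec} by exactly this construction (Sections~\ref{sec: proof}--\ref{sec: wrap}), of which the stated result is the dense special case.
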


In contrast to Theorem~\ref{thm: simple}, the result of Theorem~\ref{thm: old rec} implies that we can have perfect recovery if the graph contains a small number of nodes that shouldn't be assigned to any of the planted clusters.
Each potential edge from each of these nodes to any other node is added independently to the graph with probability $q$, so that each node in $C_{k+1}$ has roughly the same number of neighbours in each cluster block. This implies that such a node is not assigned to any of the planted clusters because it is weakly associated with all of the planted clusters. It is important to note that this edge assignment is performed randomly and not deterministically by an adversary attempting to obscure the cluster structure present in the graph.
%
We present a new analysis that improves upon the recovery guarantee of Theorem~\ref{thm: old rec} in two ways.
First, the hypothesis of Theorem~\ref{thm: old rec} assumes that between-cluster and within-cluster edge weights are~i.i.d. We consider the more general heterogeneous case constructed as follows:
\begin{itemize}
	\item For each $u \in C_i$, $v \in C_j$, we sample the edge weight $w_{uv} = w_{vu}$ from distribution $\Omega_{ij}$ with
	\[
	\E[w_{uv}]  = \E[w_{vu}] = \mu_{ij}, \hspace{0.25in}
	\text{Var}[w_{uv}] = \text{Var}[w_{vu}] = \sigma^2_{ij},
	\hspace{0.25in}
	0 \le w_{uv} \le 1.
	\]
\end{itemize}
This forces weights within the same block to be i.i.d., but weight may not be identically distributed in different blocks.

Second, the analysis leading to Theorem~\ref{thm: old rec} assumes that the expectations of \(\Omega_{1}, \Omega_2\)
in the planted cluster model are fixed and that the variances are bounded by \(1\).
We improve upon the recovery guarantee of Theorem~\ref{thm: old rec} by considering the case
where the parameters \(\alpha\) and \(\beta\) depend on the number of nodes \(n\) in the graph.
In particular, our recovery guarantees explicitly depend on the
variances of the distributions \(\Omega_{ij}\), and their scaling with $n$, which will expand the set of graphs
known to be clusterable by \eqref{eq: kdc sdp}. We have the following theorem.

\begin{theorem} \label{thm: rec}
	Suppose that the vertex sets \(C_1,\dots,C_k\) define a \(k\)-disjoint-clique subgraph \(K^*\) of the weighted
	complete graph
	\(K_n = (V,\bs W)\) on \(n\) vertices and let \(C_{k+1} = V \setminus \left(\cup_{i=1}^k C_i \right)\). Let \(r_i = |C_i|\) for all
	\(i=1,\dots,k+1\) and let \(\hat{r} = \min_{i=1,\dots,k} r_i\).
	Let \(\bs{W} \in \Sigma^n\)
	be a random symmetric matrix sampled from the heterogeneous planted cluster model according to distributions \(\{\Omega_{ij} \}\) with expected values \(\mu_{ij} = \mu_{ij}(n)\) and variances
	\(\sigma_{ij}^2 = \sigma_{ij}^2(n)\).
	Let $\tilde \sigma := \max_{q,s} \sigma_{qs}$ and
	$\hat \sigma := \max_{q} \sigma_{qq}$.
	Let \(\X^*\) be the feasible solution to \eqref{eq: kdc sdp} corresponding to \(C_1,\dots,C_k\) defined by
	\eqref{eq: cluster X}.
  Let
  $$
    \gamma := \min_{\substack{q,s=1,2,\dots, k \\ q\neq s}} \bra{\mu_{qq} - \mu_{qs}}.
  $$
  Then there exists scalar $c > 0$ such that if
   \begin{equation}\label{eq:smallest_largest}
 		\gamma \hat r \ge 
 		c \max \bra{ \sqrt{\tilde\sigma^2 n},\;
      \sqrt{\tilde\sigma^2 \hat r \log n},\;
      \sqrt{\hat\sigma^2 k r_{k+1}}, \;
      \sqrt{k r_{k+1} \log n / \hat r}, \;      \mu_{k+1, k+1} r_{k+1}, \; \log n
      %
      },
 	\end{equation}
	then \(\bs{X}^*\) is the unique optimal solution for \eqref{eq: kdc sdp}, and \(K^*\) is the unique maximum density \kdc
	subgraph of \(K_n\) with high probability.
\end{theorem}
%

The \emph{weak assortativity} condition~\eqref{eq:smallest_largest} implies that we have perfect recovery provided that the gap between the cluster block expectation $\mu_{qq}$ and the largest between-cluster block expectation $\mu_{qs}$ is sufficiently large for all clusters $C_q$, $q=1,\dots, k$, relative to the minimum cluster size, number of unassigned nodes $r_{k+1}$, number of clusters,  and edge weight variances.
In the 
Bernoulli case, i.e., within-cluster and between-cluster edges are added independently with probabilities $p$ and $q$, respectively, Theorem~\ref{thm: rec} and, in particular,
\eqref{eq:smallest_largest} establish that we can recover the planted
clusters provided that
\[
	\frac{(p-q)^2}{\tilde\sigma^2} = \frac{(p-q)^2}{\max\{p(1-p), q(1-q)\}} = \Omega\rbra{ \frac{ n }{\hat r^2}}.
\]
This result agrees with the Easy Regime for cluster recovery
proposed by \cite{chen2014statistical}, where a polynomial-time
algorithm exists for exact recovery of the planted clusters, in this case, the solution of the semidefinite relaxation \eqref{eq: kdc sdp}.
One distinct advantage of this result over similar recovery guarantees
is that our model and phase transition are largely parameter free.
For example, \citet{amini2014semidefinite} present an analysis of three
semidefinite relaxations that obtain nearly identical conditions
on $\{\Omega_{ij}\}$ guaranteeing recovery but restrict their
analysis to the case where the clusters are identical in size
or otherwise known and when $\{\Omega_{ij}\}$ are Bernoulli distributions; we should note that
\citet{amini2014semidefinite} consider heterogeneous Bernoulli
distributions where the within-cluster and between-cluster probabilities
of adding an edge vary across clusters.
Similarly, \citet{chen2014statistical} and \citet{jalali2015relative} give identical
conditions for recovery
(up to constants and logarithmic terms)
in the Bernoulli case
to those in Theorem \ref{thm: rec} for
semidefinite relaxations that require the sizes of the clusters to
be used as input parameters (or all clusters to have identical size), neither
of which are realistic assumptions in practice.
In contrast, our approach achieves this recovery guarantee
using \emph{only the desired number of clusters as a parameter}.
Further, our guarantee extends to the general weighted case where
the vast majority of existing recovery guarantees for stochastic
block models are restricted to the Bernoulli case.

It is important to note that tighter recovery guarantees than those provided by Theorem~\ref{thm: rec} are known for specific problem settings. This is a natural consequence of the more general framework of our analysis.
For example, \cite{yan2017provable} studies a convex relaxation for cluster recovery in the Bernoulli (unweighted) case. The main theorem of this article establishes conditions for perfect recovery that allow larger clusters to have higher variance, although specialized for the unweighted case. Moreover, \cite{yan2017provable} consider the use of a tuning parameter to allow recovery without knowledge of the number of clusters $k$. On the other hand, the results of \cite{amini2014semidefinite,jalali2015relative} also provide tighter recovery guarantees but require knowledge of cluster sizes.  The key contribution of this work is the presentation of a recovery guarantee that extends to the weighted case without strict assumptions regarding input parameters, as well as the first-order method for solution of~\eqref{eq: cluster SDP} discussed in detail in Section~\ref{sec: num}.

To further illustrate the consequences of Theorem~\ref{thm: rec}, we consider
several examples. In each, we assume that the graph is
generated in the homogeneous setting where
within-cluster weights are i.i.d.~according to $\Omega_1$ with mean $\alpha$ and variance $
\sigma_1^2$, and
between-cluster weights are i.i.d.~according to
$\Omega_2$ with mean $\beta < \alpha$ and variance $\sigma_2^2$.

\subsubsection{The Dense Case}

When \( \alpha, \beta \) are fixed, we obtain the same recovery guarantee as before, up to constants and logarithmic
terms:  we have exact recovery w.h.p.~if \(\hat r \ge \tilde c_1 \sqrt{n} \) and \(\hat r \ge \tilde c_2 r_{k+1} \)
for some constants \(\tilde c_1\) and \(\tilde c_2\) depending on \(\Omega_1\) and \(\Omega_2\).
Indeed, each of the pointwise maximums in the first  three terms
of \eqref{eq:smallest_largest} is bounded above by $O(\sqrt{n})$
since $\tilde r \le n$, and $k r_{k+1} = O(n)$ if $r_{k+1} = O(\hat r)$.

\subsubsection{The Sparse Case}


On the other hand, if noise in the form of between-cluster
edge-weight is small, then we
should expect to be able recover
much smaller clusters.
For example, suppose that \(\Omega_2\) is the Bernoulli distribution with probability of adding an edge \(q\) and that \(\Omega_1\) is the Bernoulli distribution with probability of adding an edge \(p=1\) (the assumption that \(p=1\) is for the sake of simplicity in this example and we can expect analogous recovery guarantees for
any \(p\) tending slowly enough to \(0\)).
Assume further that
\(
q (1-q)  \le {\log n }/{n}.
\)
Finally, again for simplicity, assume that we have \(k\) equally sized clusters of size \(\hat r = n/k\)
and  (\(r_{k+1} = 0\)).
In this case, \eqref{eq:smallest_largest}
holds if
$$
\gamma \hat r \ge c \log n = c \max \bra{\sqrt{\log n},\; \sqrt{\frac{ \hat r \log^2n}{n}},\;\log n}
\ge c \max \bra{\sqrt{\tilde\sigma^2 n}, \; \sqrt{\tilde\sigma^2 r \log n},\; \log n},
$$
since $\tilde \sigma^2  = \max\{p(1-p), q(1-q)\} = q(1-q) \le \log n / n$
and the terms involving \(r_{k+1}\) and \(1-p\) are equal to zero.
This implies that we have exact recovery of the planted clusters
w.h.p.~provided $\hat r =\Omega(\log n)$. This exceeds the
state of the art recovery bound
of $\hat r = \Omega(\sqrt{\log n})$ established in \cite{jalali2015relative}
by a factor of $\sqrt{\log n}$.
However, the convex relaxation proposed by
\cite{jalali2015relative} requires knowledge
of $\sum_{i=1}^k r_i^2$, which is often an unrealistic
expectation in practice;
in contrast, our approach only requires knowledge
of the number of clusters $k$ present in the data.
Further,  the requirement $\hat r = \Omega(\log n)$
is enforced by the gap inequality \eqref{eq:smallest_largest},
which itself is a consequence of the use of the Bernstein inequality
to establish certain dual variables are nonnegative
in the proof of Theorem~\ref{thm: rec} (see Section~\ref{sec: nonneg}
for more details). It may be possible to improve this bound
to $\hat r = \Omega(\sqrt{\log n})$ with improved concentration
inequalities
 but it is unclear
what form these improvements may take.

\subsubsection{The Planted Clique and Sparsest Subgraph}

In the special case when $k=1$ and $\Omega_1$ and $\Omega_2$
are Bernoulli distributions, the planted cluster model
specializes to the planted clique model considered in
\cite{ames2011nuclear} and \cite{ames2015guaranteed}.
In this case, \eqref{eq:smallest_largest}
suggests that we can recover a planted clique (in the dense
case) of size $r_1 = \Omega \rbra{ \max \bra{\sqrt{n}, r_2 } } = \Omega \rbra{ \max \bra{\sqrt{n}, n - r_1 } }$.
This recovery guarantee is far more conservative than those
provided by \cite{ames2011nuclear} and \cite{ames2015guaranteed},
among others,
which establish that a planted clique
of size $\Omega(\sqrt n)$ can be recovered from the optimal solution
of a particular nuclear norm relaxation of the maximum clique problem.

Unfortunately, it appears that this lower bound restricting the size
of a recoverable planted clique
to a constant multiple of the number of nonclique vertices
is tight.
For example, let $p$ and $q$ be the probabilities of adding an edge
given by $\Omega_1$ and $\Omega_2$.
Then the expected value of the proposed solution $\X^*$
in \eqref{eq: kdc sdp}
is equal to
$$
	\E [ \tr(\W \X^*)] = \frac{1}{\hat r} \sum_{i \in C_1} \sum_{j \in C_1}
		\E[w_{ij} ] = p \hat r.
$$
On the other hand, the solution $\frac{1}{n} \e\e^T$ is also
feasible for \eqref{eq: kdc sdp} with expected objective value
$$
	\E \sbra{ \frac{1}{n} \tr(\W \e\e^T) }
	 \ge \hat c q n,
$$
for some constant $\hat c$.
This implies that the proposed solution is suboptimal if
$p \hat r < \hat c q n$, which holds unless $\hat r \ge \hat c (q/p) n$.
We will see that the realized values of these sums
are concentrated near their expectations and thus we cannot
reasonably expect to recover planted clusters with unassigned nodes significantly
outnumbering the smallest cluster. This implies that we cannot
recover planted cliques of size $\omega(n)$ by maximizing
density of a complete subgraph because the planted clique
is not the index set of the densest such graph; in this case, the
entire graph is a denser
complete subgraph, as measured by average vertex degree,
in expectation.

\section{Derivation of the Recovery Guarantee}
\label{sec: proof}
In this section, we show that if the hypothesis of Theorem~\ref{thm: rec} is satisfied
then the solution \(\X^*\) constructed according to \eqref{eq: cluster X}
is optimal for \eqref{eq: kdc sdp} and the corresponding \kdc subgraph has maximum density.
In particular, we will show that \(\X^*\) satisfies
the following sufficient condition for optimality of a feasible solution of \eqref{eq: kdc sdp}
\citep[see][Theorem 4.1]{ames2014guaranteed}.

\renewcommand{\S}{\bs{S}}
\begin{theorem}
	\label{T:  w KKT conditions}
	Let $\X$ be feasible for \eqref{eq: kdc sdp} and suppose that
	there exist some $\tau \in \R$, $\bs\lambda \in \R^n_+$, $\bs\Xi \in \R^{n\times n}_{+}$ and $\S \in \Sigma^n_+$ such that
	\begin{align}
	- \W + \bs\lambda \e^T + \e \bs\lambda^T - \bs\Xi + \tau \bs I &= \S \label{E: w dual feas} \\
	\bs\lambda^T (\X\e - \e)  &= 0  \label{E: w CS rowsum} \\
	\tr(\X  \bs\Xi) &= 0 \label{E: w CS nonneg} \\
	\tr (\X \S)  &= 0. \label{E: w CS sdp}
	\end{align}
	Then $\X$ is optimal for \eqref{eq: kdc sdp}.
\end{theorem}

Theorem~\ref{T: w KKT conditions} is a restriction of the Karush-Kuhn-Tucker optimality conditions
to the semidefinite program \eqref{eq: kdc sdp} \citep[see, for example,][Section 5.5.3]{boyd2009convex}.
The goal of this section is to establish that we can construct dual variables $\tau \in \R$, $\bs\lambda \in \R^n_+$, $\bs\Xi \in \R^{n\times n}_{+}$ and $\S \in \Sigma^n_+$
which satisfy the hypothesis
of Theorem~\ref{T: w KKT conditions} with high probability if the weight matrix \(\W\) is
sampled from a distribution of clusterable block models.
To motivate our proposed choice of dual variables, we note that
the complementary slackness condition \(\tr(\X\S) = 0\) holds if and only if \(\X\S = \bs 0\) under the assumption that
both \(\X\) and \(\S\) are positive semidefinite.
Therefore, the block structure of \(\X\) implies that
each block of \(\S\) corresponding to a cluster block in \(\W\) must sum to zero.

Before we continue with the construction of our dual variables, let us first remind ourselves
of the notation of Theorem~\ref{thm: rec}.
Let $K^*$ be a $k$-disjoint-clique subgraph of $K_n$ with vertex set composed of the disjoint cliques $C_1, \dots, C_k$ of sizes $r_1, \dots, r_k$  and let
$\X^*$ be the corresponding feasible solution of \eqref{eq: kdc sdp} defined by \eqref{eq: cluster X}.
Let $C_{k+1} := V \setminus  (\cup^k_{i=1} C_i)$ and $r_{k+1} := n - \sum_{i=1}^k r_i$ be the size
of  \(C_{k+1}\).
Moreover, let $\hat r := \min_{i=1, \dots, k} r_i$ and
\( \tilde r := \max_{i=1,\dots, k} r_i\)  be the size of the smallest and largest clusters,
respectively.
Let $\W \in \Sigma^n$ be a random symmetric matrix
sampled from the planted cluster model with planted clusters \(C_1, \dots, C_k\) and
remaining nodes \(C_{k+1}\)
according to the distributions \(\{\Omega_{ij}\}\) with means \( \bra{\mu_{ij}} \)
and variances $\{\sigma^2_{ij}\}$.

We now propose a choice of dual variables satisfying the complementary slackness condition \(\X\S = \bs 0\).
Restricting this condition to the blocks \(\X_{C_q, C_q}\) and \(\S_{C_q, C_q}\) of \(\X\) and \(\S\) with
rows and columns indexed by \(C_q\), \(q \in\{1,2,\dots, k\}\), we see that \(\X^*\S=\bs 0\) holds if and only if
\[
\bs 0 = \S_{C_q, C_q} \e = 
\tau \e  + r_q \bs\lambda_{C_q} + (\bs\lambda_{C_q}^T \e) \e - \W_{C_q, C_q} \e,
\]
by the block structure of \(\X^*\);
note that \(\bs\Xi_{C_q, C_q} = \bs 0\) is chosen to satisfy the complementary slackness condition \eqref{E: w CS nonneg}.
Solving this linear system for \(\bs\lambda_{C_q}\)
using the Sherman-Morrison-Woodbury
Formula~\cite[Equation (2.1.4)]{golub2013matrix} gives
\begin{equation} \label{eq: lam q}
\bs \lambda_{C_q} = \frac{1}{r_q} \left( \W_{C_q, C_q} \e - \frac{1}{2}
\left(\tau + \frac{ \e^T \W_{C_q, C_q} \e}{r_q} \right)   \e \right).
\end{equation}
On the other hand, we choose \(\bs\lambda_{C_{k+1}} = \0\)
to satisfy the complementary slackness condition
\eqref{E: w CS rowsum}. 
Next, we use this choice of \(\bs \lambda\) to construct the remaining dual variables.

Fix \(q, s \in \{1,2,\dots, k+1\}\) such that \(q\neq s\).
We will choose \(\bs \Xi_{C_q, C_s} \) so that \(\S_{C_q, C_s}\e = \bs 0\) and \( \S_{C_s, C_q} \e = \bs 0 \).
In particular, we choose
\begin{equation} \label{eq: Xi choice 1}
	\resizebox{0.92\hsize}{!}{$
\bs\Xi_{C_q, C_s} = \rbra{ \frac{1-\delta_{q, k+1}}{2}
\rbra{\mu_{qq} - \frac{\tau}{r_q}} +
\frac{1-\delta_{s, k+1}}{2}
\rbra{\mu_{ss} - \frac{\tau}{r_s}} - \mu_{qs}} \e\e^T
+ \y^{q,s} \e^T + \e (\z^{q,s})^T,
$}
\end{equation}
where the vectors \( \y^{q,s}\) and \(\z^{q,s}\) are unknown vectors parametrizing the entries of \(\bs\Xi_{C_q, C_s}\);
here
\(\delta_{i,j}\) is the Kronecker delta function defined by \( \delta_{i,j} = 1 \) if \(i=j\)
	and \(0\) otherwise.
That is, we choose \(\bs\Xi_{C_q, C_s} \) to be the expected value of
\( \bs \lambda_{C_q} \e^T + \e \bs \lambda_{C_s}^T
- \W_{C_q, C_s} \) plus the parametrizing term \(\y^{q,s} \e^T + \e (\z^{q,s})^T\); the vectors
\( \y^{q,s}\) and \(\z^{q,s}\) are chosen to be solutions of the systems of linear equations given
by the complementary slackness conditions
\(\S_{C_q, C_s}\e = \bs 0\) and \( \S_{C_s, C_q} \e = \bs 0 \).
It is reasonably straight-forward to show that we may choose
\begin{equation} \label{eq: y z}
\y^{q,s} = \frac{1}{r_s} \rbra{ \b_{q,s} - \frac{\b_{q,s}^T \e }{r_q + r_s} \e }
\hspace{0.5in}
\z^{q,s} = \frac{1}{r_q} \rbra{ \b_{s,q} - \frac{\b_{s,q}^T \e}{r_q + r_s} \e },
\end{equation}
where
\begin{equation} \label{eq: b}
\b_{q,s} = \rbra{ \bs \lambda_{C_q} \e^T + \e \bs \lambda_{C_s}^T
- \W_{C_q, C_s} - \E\sbra{\bs \lambda_{C_q} \e^T + \e \bs \lambda_{C_s}^T
- \W_{C_q, C_s}} } \e.
\end{equation}
Indeed, we must choose $\y = \y^{q,s}$ and $\z = \z^{q,s}$
to be solutions of the system
\newcommand{\I}{\bs I}
\begin{equation} \label{eq: yz system}
	\mat{{cc} r_s \I + \e\e^T & 0 \\ 0 & r_q \I + \e\e^T }
	\vect{ \y \\ \z } = \vect{\b^{q,s} \\ \b_{s,q} },
\end{equation}
to ensure that the complementary slackness conditions are satisfied.
Note that taking the inner product of each side
of \eqref{eq: yz system} with the vector \( (\e; -\e) \)
yields
\[
	(r_q + r_s)( \e^T \y -  \e^T \z) = \e^T \b^{q,s} - \e^T \b^{s,q}
	= 0,
\]
by the symmetry of \eq{\W}. This establishes that the solution
\((\y; \z)\) of \eqref{eq: yz system}
is also a solution of the (singular) system of equations,
\[
	\mat{{cc} r_s \I & \e\e^T  \\ \e\e^T & r_q \I }
	\vect{ \y \\ \z } = \vect{\b^{q,s} \\ \b_{s,q} },
\]
imposed by the complementary slackness conditions
${\S_{C_q, C_s} \e = \0}$ and ${\S_{C_s, C_q} = \0 }$.
Solving~\eqref{eq: yz system} for ${\y}$ and $\z$
using the Sherman-Morrison-Woodbury
Formula
yields the formula for $\y$ and $\z$ given by
\eqref{eq: y z}.
We set the remaining block $\bs\Xi_{C_{k+1}, C_{k+1} } = \0$.
\citet[Section~4.2]{ames2014guaranteed} provides further details.

Finally, we choose
\begin{equation} \label{eq: mu}
\tau = \min_{\substack{q,s=1,\dots, k \\ q \neq s }} \bra{\mu_{qq} - \mu_{qs}} \epsilon  \hat r =: \gamma\epsilon  \hat r,
\end{equation}
where \(\epsilon > 0\) is a parameter to be chosen later.
In particular, the analysis provided in Sections~\ref{sec: nonneg},~\ref{sec: S}, and~\ref{sec: wrap} establishes that
a suitable choice of $\epsilon$ exists if the hypothesis of
Theorem~\ref{thm: rec} is satisfied.

The entries of \(\S\) are chosen according to the stationarity condition~\eqref{E: w dual feas}, but we will also define
an auxiliary variable \(\St \in \Sigma^n\) as the following \((k+1)\times (k+1)\) block matrix:
\begin{equation} \label{e: tilde S def}
\St_{C_q, C_s} = \branchdef{
	\mu_{q,s}\e\e^T - \W_{C_q, C_s},  & \mbox{if } q, s \in \{1,\dots, k\}, \\
	\mu_{q,k+1} \e\e^T - \W_{C_q, C_{k+1}} + (\bs\lambda_{C_q} - \E[\bs\lambda_{C_q}] )\e^T ,  & \mbox{if } s = {k+1},  \\
	\mu_{k+1,s} \e\e^T - \W_{C_{k+1}, C_s} + \e (\bs \lambda_{C_s} - \E[\bs\lambda_{C_s}] )^T,   & \mbox{if } q = {k+1} .
}
\end{equation}
We next provide the following theorem, first stated by \citet[Theorem~4.2]{ames2014guaranteed},
which characterizes when the proposed dual variables satisfy the hypothesis
of Theorem~\ref{T:  w KKT conditions}.

\begin{theorem} \label{T: S opt conds}
	Suppose that the vertex sets $C_1, \dots, C_k$ define a $k$-disjoint-clique subgraph $K^*$ of
	the weighted complete graph $K_n = (V, \W)$, where
	$\W \in \Sigma^n$ is a random symmetric matrix sampled from the planted cluster model
  according to the distributions \(\{\Omega_{ij}\}\) with means \( \bra{\mu_{ij}} \)
  and variances $\{\sigma^2_{ij}\}$.
	Let $r_1, \dots, r_{k+1}$, and  $\hat r$ be defined as in Theorem~\ref{thm: rec}.
	Let $\X^*$ be the feasible solution for \eqref{eq: kdc sdp} corresponding to $C_1, \dots, C_k$ defined
	by \eqref{eq: cluster X}. Let $\tau \in \R,$ $\bs\lambda \in \R^n,$  and $\bs\Xi \in \R^{n\times n}$
	be chosen according to  \eqref{eq: mu}, \eqref{eq: lam q},  and \eqref{eq: Xi choice 1}, respectively,
	and let $\St$ be chosen according to \eqref{e: tilde S def}.
	Suppose that the entries of  $\bs\lambda$ and  $\bs\Xi$ are nonnegative.
	Then
	$\X^*$ is optimal for \eqref{eq: kdc sdp}, and $K^*$  is the maximum density
	$k$-disjoint-clique
	subgraph of $K_n$ corresponding to $\W$, if
	\begin{equation} \label{eq: St bound}
	\| \St\| \le  \epsilon \gamma \hat r.
	\end{equation}
	Moreover, if \eqref{eq: St bound} is satisfied and
	\begin{equation} \label{a: block weights}
	r_s \e^T \W_{C_q, C_q} \e > r_q \e^T \W_{C_q, C_s} \e,
	\end{equation}
	for all $q, s \in \{1,\dots,k\}$ such that $q\neq s$,
	then $\X^*$ is the unique optimal solution of \eqref{eq: kdc sdp} and
	$K^*$ is the unique maximum density $k$-disjoint-clique
	subgraph of $K_n$.
\end{theorem}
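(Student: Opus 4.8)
The plan is to verify, piece by piece, that the quadruple $(\mu,\bs\lambda,\bs\Xi,\S)$ — with $\mu,\bs\lambda,\bs\Xi$ given by \eqref{eq: lam q}, \eqref{eq: Xi choice 1}, \eqref{eq: y z}, \eqref{eq: mu}, and with $\S$ \emph{defined} by the stationarity equation \eqref{E: w dual feas} — is a valid dual certificate in the sense of Theorem~\ref{T: w KKT conditions}, and then to upgrade optimality of $\X^*$ to uniqueness under \eqref{a: block weights}. First I would record that $\X^*$ is feasible for \eqref{eq: kdc sdp}: by \eqref{eq: cluster X} it is a sum of $k$ mutually orthogonal rank-one positive semidefinite matrices $\v_i\v_i^T/\|\v_i\|^2$, hence positive semidefinite with trace $k$; its row sums equal $1$ on $C_1,\dots,C_k$ and $0$ on $C_{k+1}$, so $\X^*\e\le\e$; and its entries are nonnegative. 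In fact $\X^*$ is exactly the orthogonal projector onto $W:=\Span\{\v_1,\dots,\v_k\}$, a fact used below. Setting $\S:=-\W+\bs\lambda\e^T+\e\bs\lambda^T-\bs\Xi+\mu\bs I$ makes \eqref{E: w dual feas} hold by construction; one checks $\S\in\Sigma^n$ from the symmetry of $\W$ and from $\bs\Xi_{C_q,C_s}^T=\bs\Xi_{C_s,C_q}$ in \eqref{eq: Xi choice 1}--\eqref{eq: y z}, and $\mu>0$ since $\gamma>0$ and $\epsilon>0$.

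Next I would check the three complementary-slackness conditions. Since $\bs\lambda_{C_{k+1}}=\0$ and $\X^*\e-\e$ is supported on $C_{k+1}$, \eqref{E: w CS rowsum} holds. Since $\X^*$ is block-diagonal with respect to $C_1,\dots,C_k$ and every diagonal block of $\bs\Xi$ was set to $\0$, \eqref{E: w CS nonneg} holds. For \eqref{E: w CS sdp}, recall that for positive semidefinite $\X^*,\S$ the condition $\tr(\X^*\S)=0$ is equivalent to $\X^*\S=\0$, i.e.\ to $W\subseteq\ker\S$, i.e.\ to $\S\v_i=\0$ for $i=1,\dots,k$; equivalently, the relevant block row sums of $\S$ vanish. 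This is precisely what the formulas \eqref{eq: lam q} and \eqref{eq: y z} were manufactured to enforce: $\bs\lambda_{C_q}$ solves the linear system $\S_{C_q,C_q}\e=\0$, and $(\y^{q,s},\z^{q,s})$ solves \eqref{eq: yz system}, which imposes $\S_{C_q,C_s}\e=\0$ and $\S_{C_s,C_q}\e=\0$ (this is the source of the Sherman--Morrison--Woodbury solves in the derivation). Hence all of \eqref{E: w CS rowsum}--\eqref{E: w CS sdp} hold, and — the nonnegativity of $\bs\lambda$ and $\bs\Xi$ being assumed in the hypothesis — the only outstanding requirement of Theorem~\ref{T: w KKT conditions} is $\S\in\Sigma^n_+$.

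The crux is therefore $\S\succeq0$, and this is where \eqref{eq: St bound} enters. Because $\S$ vanishes on $W$, we have $\S=(\bs I-\X^*)\S(\bs I-\X^*)$, so it suffices to show $\x^T\S\x\ge0$ for every $\x\in W^\perp$, i.e.\ every $\x$ with $\e^T\x_{C_q}=0$ for $q=1,\dots,k$. Expanding \eqref{E: w dual feas} block by block and comparing with the auxiliary matrix $\St$ of \eqref{e: tilde S def}, one finds that $\S$ and $\St$ differ only by (i) the term $\mu\bs I$ on the diagonal blocks and (ii) rank-one-per-block corrections of the form $(\cdot)\e^T+\e(\cdot)^T$ coming from $\bs\lambda-\E[\bs\lambda]$, from $\y^{q,s},\z^{q,s}$, and from the constant shifts in \eqref{eq: Xi choice 1}. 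On $W^\perp$ the factors $\e$ sitting on cluster blocks annihilate against $\x_{C_q}$, so $\x^T\S\x=\x^T\St\x+\mu\|\x\|^2$ up to residual terms involving the outlier block $C_{k+1}$ (whose coordinates are unconstrained), and these residual terms are controlled by the specific choices \eqref{eq: lam q}, \eqref{eq: y z}, following the bookkeeping in \citet[Section~4.2]{ames2014guaranteed}. Combining, $\x^T\S\x\ge\mu\|\x\|^2-\|\St\|\,\|\x\|^2=\rbra{\epsilon\gamma\hat r-\|\St\|}\|\x\|^2\ge0$ by \eqref{eq: St bound}. Hence $\S\succeq0$, Theorem~\ref{T: w KKT conditions} applies, and $\X^*$ is optimal for \eqref{eq: kdc sdp}; since the objective value of $\X^*$ equals the density of $K^*$ and \eqref{eq: kdc sdp} relaxes the densest $k$-disjoint-clique problem, $K^*$ attains the maximum density.

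For uniqueness I would argue that the certificate is strictly complementary: the positive definiteness of $\S$ on $W^\perp$ (the inequality above with a strict margin, available from the freedom in $\epsilon$) forces $\rank\S=n-k$, so any optimal $\X$ of \eqref{eq: kdc sdp} has $\text{range}(\X)\subseteq\ker\S=W$; together with $\tr\X=k$, $\X\e\le\e$, $\X\ge\0$ and the objective, this pins $\X=\X^*$, and the strict inequality \eqref{a: block weights} — which says no between-cluster block can be traded for a within-cluster block without strictly lowering total density — rules out any competing $k$-disjoint-clique subgraph of the same density, giving uniqueness of $K^*$. The main obstacle in this plan is the outlier part of the $\S\succeq0$ step: the blocks of $\S$ meeting $C_{k+1}$ carry correction terms whose contraction with $\x$ need not vanish on $W^\perp$, so one must verify that the particular $\bs\lambda$ and $\y^{q,s},\z^{q,s}$ chosen make these contributions either cancel in the sum over clusters or be dominated by the $\mu\|\x\|^2$ slack — exactly the technical content imported from \citet{ames2014guaranteed}, which is what lets the entire probabilistic burden of Theorem~\ref{thm: rec} be reduced to bounding $\|\St\|$ and to checking $\bs\lambda,\bs\Xi\ge\0$ in the subsequent sections.
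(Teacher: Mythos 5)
Your proposal is correct and follows essentially the same route as the paper, which in fact omits this proof entirely and defers to \citet[Theorem~4.2]{ames2014guaranteed}: verify feasibility and the complementary slackness conditions by construction of $\bs\lambda$, $\bs\Xi$, reduce $\S\succeq\0$ to a quadratic-form bound on $W^\perp$ where $\S$ agrees with $\St+\mu\bs I$ modulo rank-one terms annihilated by $\e^T\x_{C_q}=0$, and invoke \eqref{eq: St bound} together with \eqref{a: block weights} for uniqueness. You also correctly identify the outlier-block bookkeeping as the one genuinely technical step imported from that reference, which is exactly where the paper places it.
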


The proof of Theorem~\ref{T: S opt conds} is nearly identical to that by \citet[Theorem~4.2]{ames2014guaranteed}, and is
omitted.
Theorem~\ref{T: S opt conds} provides a clear roadmap for the remainder of the proof; if we can show that if \(\W\) is sampled from the planted cluster
model satisfying \eqref{eq:smallest_largest} then \(\bs \lambda\)
and \(\bs \Xi\) are nonnegative and \(\|\St\| \le \epsilon \gamma \hat r\) with high probability, then we will have established that we can recover the underlying block
structure with high probability in this case. We establish the necessary bounds on \(\bs \lambda,\) \(\bs \Xi,\) and \(\|\St\|\)
in the following sections.

\subsection{Nonnegativity of \(\bs\lambda\) and \(\bs\Xi\)}
\label{sec: nonneg}
We first establish that the entries of \(\bs \Xi\), as constructed according to \eqref{eq: Xi choice 1},
are nonnegative with high probability.
To do so, we will make repeated use of the following specialization of the
Bernstein inequality which provides a  bound on the tail of a sum of bounded independent random variables; see
\citet[Section~2.8]{lugosi2009}, for more details regarding  the Bernstein inequality.

\begin{theorem}
	\label{thm: Hoeffding}
	Let $x_1, \dots, x_m$ be \iid variables with mean $\mu$
	and variance $\sigma^2$.
	Let $S = x_1 + \cdots + x_m$. Then
	\begin{equation}
	\label{eq:hoeffding}
	Pr\rbra{ |S - \mu m| > 6 \max \bra{\sqrt{\sigma^2 m \log T}, \log T  }}
	 \le 2 T^{-6},
	\end{equation}
	for all $T > 1$.
\end{theorem}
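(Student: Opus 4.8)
The plan is to obtain the stated bound as an immediate consequence of the classical Bernstein inequality for sums of bounded independent random variables, followed by a short case analysis. Assume, as in the planted cluster model, that the variables satisfy $|x_i - \mu| \le 1$ almost surely (this boundedness hypothesis is implicit in the text preceding the statement and should be made explicit). Then Bernstein's inequality (see, e.g., \citet[Theorem~6]{lugosi2009}) gives, for every $t > 0$,
\[
\pr\bigl( |S - \mu m| > t \bigr) \le 2 \exp\!\left( - \frac{t^2/2}{\sigma^2 m + t/3} \right).
\]
First I would substitute $t = 6 \max\{ \sqrt{\sigma^2 m \log T},\, \log T \}$ into the right-hand side; since $2\exp(-6\log T) = 2T^{-6}$, it suffices to show that for this value of $t$ the exponent satisfies $t^2/(2(\sigma^2 m + t/3)) \ge 6 \log T$.

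To verify this I would split on which term attains the maximum. If $\sigma^2 m \ge \log T$, then $t = 6\sqrt{\sigma^2 m \log T}$, so $t^2/2 = 18\sigma^2 m \log T$, while $\sqrt{\sigma^2 m \log T} \le \sigma^2 m$ yields $t/3 \le 2\sigma^2 m$, hence $\sigma^2 m + t/3 \le 3\sigma^2 m$ and the exponent is at least $18\sigma^2 m \log T / (3\sigma^2 m) = 6\log T$. If instead $\sigma^2 m < \log T$, then $t = 6\log T$, so $t^2/2 = 18(\log T)^2$ and $\sigma^2 m + t/3 < \log T + 2\log T = 3\log T$, so the exponent is at least $18(\log T)^2/(3\log T) = 6\log T$. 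In either case the exponent is $\ge 6\log T$, and the claim follows; the assumption $T > 1$ is exactly what makes $\log T > 0$ so that these manipulations are valid.

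Honestly, there is no genuine obstacle here: the statement is a cosmetic repackaging of Bernstein's inequality, tuned so that a single union bound over polynomially many events (as carried out repeatedly in Sections~\ref{sec: nonneg}--\ref{sec: wrap}) still leaves a polynomially small failure probability. The only points needing care are (i) making the boundedness assumption on the $x_i$ explicit, since it is used to control the $t/3$ term in Bernstein's denominator — if one assumes only $|x_i| \le 1$, the same argument runs with the constant $3$ replaced by a slightly larger one, and more generally boundedness can be weakened to high-probability boundedness as remarked after the planted cluster model definition — and (ii) observing, if desired, that any multiplicative constant $\ge 6$ in the definition of $t$ produces the $T^{-6}$ tail, so the choice of $6$ is merely a convenient bookkeeping device.
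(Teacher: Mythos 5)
Your proof is correct and takes exactly the route the paper intends: the paper states this result as a ``specialization of the Bernstein inequality'' with a citation and gives no proof of its own, and your substitution of $t = 6\max\{\sqrt{\sigma^2 m \log T}, \log T\}$ into the classical Bernstein bound, together with the two-case verification that the exponent is at least $6\log T$, is precisely the omitted computation. Your remark that the boundedness hypothesis $|x_i - \mu| \le 1$ must be made explicit is also well taken, since the theorem statement omits it even though the surrounding text and the planted cluster model (with $w_{ij} \in [0,1]$) supply it.
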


The following bound
on the parametrizing vectors \(\y^{q,s}\) and \(\z^{q,s}\) in the choice of the \((C_q, C_s)\) block of \(\bs\Xi\) defined by~\eqref{eq: Xi choice 1}
is an immediate consequence of Theorem~\ref{thm: Hoeffding}.

\begin{lemma}
	\label{lem: yz bound}
	There exists constant $c > 0$ such that
	\begin{equation*}
		\|\bs{y}^{q,s}\|_{\infty} +  \|\bs{z}^{q,s}\|_{\infty} \leq
		c \max \bra{ \sqrt{ \frac{\tilde \sigma^2 \log n}{\hat r}} ,
			\frac{\log n}{\hat r} },
	\end{equation*}
	w.h.p.,
	where $\tilde \sigma := \max \{ \sigma_{ij} : i,j = 1,2,\dots, k+1 \}$,
	for all \(q,s \in \{1,\dots, k+1\}\) such that 	$q \neq s$.
\end{lemma}

For \(q,s \in \{1,\dots, k+1\}\) such that \(q \neq s\), we define \(\y^{q,s}\) and \(\z^{q,s}\) as in \eqref{eq: y z}. To bound
the absolute values of the entries of \(\y^{q,s}\) and \(\z^{q,s}\), we must estimate the sums \(\e^T \W_{C_q, C_q} \e\),
\(\e^T \W_{C_s, C_s} \e \) and \(\e^T \W_{C_q, C_s} \e \); applying Theorem~\ref{thm: Hoeffding} to bound the tails
of these sums yields Lemma~\ref{lem: yz bound}. See Appendix~\ref{app: yz} for the full argument.

We have the following bound on the entries of \(\bs \Xi\) as an immediate consequence of  Lemma~\ref{lem: yz bound}.

\begin{proposition}
	\label{thm: Xi bound}
	Suppose that $\bra{\mu_{ij}}$ satisfy \eqref{eq:smallest_largest}.
	Then there exists constant $c>0$ such that each entry of $\bs{\Xi}$ is nonnegative w.h.p.~if \(\epsilon\) 	satisfies
	\begin{equation} \label{eq: xi eps}
	0 < \epsilon \leq
		1 - \frac{c}{\gamma}\max \bra{ \sqrt{ \frac{\tilde \sigma^2 \log n}{\hat r}} ,
			\frac{\log n}{\hat r}  }.
	\end{equation}
\end{proposition}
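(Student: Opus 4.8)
The plan is to lower-bound each entry of $\bs\Xi$ by separating the deterministic part of \eqref{eq: Xi choice 1} from the random parametrizing vectors $\y^{q,s},\z^{q,s}$, and then to control the latter with Lemma~\ref{lem: yz bound}. The block $\bs\Xi_{C_{k+1},C_{k+1}}=\0$ is nonnegative by construction, so fix $q,s\in\{1,\dots,k+1\}$ with $q\neq s$ (hence at most one of $q,s$ equals $k+1$) and nodes $u\in C_q$, $v\in C_s$. By \eqref{eq: Xi choice 1} and $\alpha-\beta=\gamma$, the $(u,v)$ entry of $\bs\Xi_{C_q,C_s}$ equals
\[
  \frac{1-\delta_{q,k+1}}{2}\rbra{\gamma-\tfrac{\mu}{r_q}}
  + \frac{1-\delta_{s,k+1}}{2}\rbra{\gamma-\tfrac{\mu}{r_s}}
  + y^{q,s}_u + z^{q,s}_v .
\]

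First I would bound the deterministic part from below. Since $\mu=\epsilon\gamma\hat r$ by \eqref{eq: mu} and $\hat r\le r_i$ for every cluster index $i\in\{1,\dots,k\}$, we have $\gamma-\mu/r_i=\gamma(1-\epsilon\hat r/r_i)\ge\gamma(1-\epsilon)\ge 0$ whenever $0<\epsilon\le 1$, and this applies to whichever of $r_q,r_s$ carries a cluster index. Because $q\neq s$, at least one Kronecker delta vanishes, so the deterministic part is at least $\tfrac12\gamma(1-\epsilon)$ (and exactly $\gamma(1-\epsilon)$ if neither index is $k+1$, but the weaker bound suffices).

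Second, I would control the random part. By Lemma~\ref{lem: yz bound} — and its analogue for the blocks involving the outlier set $C_{k+1}$, obtained, as in the discussion following that lemma, by applying the Bernstein bound Theorem~\ref{thm: Hoeffding} to $\e^T\W_{C_q,C_q}\e$, $\e^T\W_{C_s,C_s}\e$ and $\e^T\W_{C_q,C_s}\e$ (using $\bs\lambda_{C_{k+1}}=\0$) — there is a constant $c'>0$ such that, simultaneously for all admissible $q\neq s$,
\[
  |y^{q,s}_u| + |z^{q,s}_v| \le \|\y^{q,s}\|_\infty + \|\z^{q,s}\|_\infty \le c'\max\bra{\sqrt{\tfrac{\tilde\sigma^2\log n}{\hat r}},\ \tfrac{\log n}{\hat r}}
\]
with high probability. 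On this event every entry of $\bs\Xi$ is at least $\tfrac12\gamma(1-\epsilon)-c'\max\{\cdots\}$, which is nonnegative precisely when $\epsilon\le 1-\frac{2c'}{\gamma}\max\{\cdots\}$; taking $c=2c'$ yields \eqref{eq: xi eps}. Finally I would observe that the gap assumption \eqref{eq:brendan_27_mod}, with $c_5$ chosen at least $c$, forces the right-hand side of \eqref{eq: xi eps} to be strictly positive, so the prescribed interval for $\epsilon$ is nonempty, and the choice $0<\epsilon\le 1$ required above is consistent.

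I do not expect a genuine obstacle here: once Lemma~\ref{lem: yz bound} is in hand the argument is essentially the two inequalities above. The only point requiring care is the bookkeeping for the outlier blocks — checking that $\y^{q,s},\z^{q,s}$ obey the same (or an equally harmless) bound when one of $q,s$ equals $k+1$ — together with the union bound over the $O(k^2)=O(n^2)$ blocks, which costs only a polynomial factor and is therefore absorbed into the ``with high probability'' by choosing the parameter $T$ in Theorem~\ref{thm: Hoeffding} to be a sufficiently large polynomial in $n$; this is exactly the uniformity already asserted in Lemma~\ref{lem: yz bound}.
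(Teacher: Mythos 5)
Your proposal is correct and follows essentially the same route as the paper's proof: expand the $(C_q,C_s)$ block of $\bs\Xi$ into its deterministic part plus the parametrizing vectors $\y^{q,s},\z^{q,s}$, bound the latter via Lemma~\ref{lem: yz bound}, and take a union bound over blocks. The only cosmetic difference is that you use the uniform lower bound $\tfrac{1}{2}\gamma(1-\epsilon)$ for every block and absorb the resulting factor of $2$ into the constant $c$, whereas the paper computes the sharper $(1-\epsilon)\gamma$ for the in-cluster blocks and dispatches the outlier blocks as ``analogous.''
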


\begin{proof}
	Fix $q,s \in \{1, \dots, k\}$ such that $q \neq s$.
	By construction, we have
	\begin{align*}
	\bs{\Xi}_{C_q,C_s} 
	&= \E\sbra{\bs{\lambda}_{C_q}\bs{e}^T+\bs{e}\bs{\lambda}_{C_s}^T - \bs{W}_{C_q,C_s}}+ \bs{y}^{q,s}\bs{e}^T+\bs{e}
	\left(\bs{z}^{q,s}\right)^T \\
	&= \left(\frac{1}{2} \left(\mu_{qq} - \frac{\tau}{r_q} \right) + \frac{1}{2} \left(\mu_{ss} - \frac{\tau}{r_s} \right) - \mu_{qs} \right)
	\bs{e}\bs{e}^T + \bs{y}^{q,s}\bs{e}^T+\bs{e}\left(\bs{z}^{q,s}\right)^T.
	\end{align*}
	Using
	\eqref{eq: mu} and
	 Lemma~\ref{lem: yz bound}, we see that
	\begin{align*}
	\Xi_{ij} &\geq \frac{1}{2} \left(\mu_{qq} - \gamma \epsilon \right) + \frac{1}{2} \left(\mu_{ss} - \gamma \epsilon \right) - \mu_{sq} -
	\|\bs{y}^{q,s}\|_{\infty}-\|\bs{z}^{q,s}\|_{\infty} \\
	& \ge (1 - \epsilon)\gamma - c \max \bra{ \sqrt{ \frac{\tilde \sigma^2 \log n}{\hat r}} ,
			\frac{\log n}{\hat r} },
	\end{align*}
	for all $i \in C_q$, $j\in C_s$ w.h.p., where $c$ is the constant appearing in
	Lemma~\ref{lem: yz bound}. Note that
	the right-hand side of this inequality is nonnegative	if and only if
	\[\epsilon \leq 1 - \frac{c}{\gamma}\max \bra{ \sqrt{ \frac{\tilde \sigma^2 \log n}{\hat r}} ,
			\frac{\log n}{\hat r} } . \]
	The argument for the case when one of $q$ or $s$ is equal to $k+1$
	follows analogously.
	Applying the union bound over all blocks of \(\bs \Xi\) shows that each entry
	of $\bs{\Xi}$ is nonnegative
	w.h.p.~if $\epsilon$ satisfies \eqref{eq: xi eps}.
\end{proof}
\medskip

We have an analogous result ensuring that the entries of \(\bs\lambda\) are nonnegative
with high probability; we present the proof of this result in Appendix~\ref{app: lam}.

\begin{proposition} \label{prop: lam}
	Suppose $\bra{\mu_{ij}}$ satisfy \eqref{eq:smallest_largest}. Then there
	exists constant $c' > 0$ such that each entry of $\bs{\lambda}$ is nonnegative w.h.p.~if $\epsilon$ satisfies
	\begin{equation} \label{eq: lam eps}
	0 < \epsilon \leq  \frac{1}{2\gamma}
	\rbra{ \mu_{qq}  - c' \max \bra{ \sqrt{  \frac{\sigma_{qq}^2 \log 		n }{r_q} },
			\frac{\log n }{r_q} } },
	\end{equation}
	for all $ q \in \{1,\dots, k\}$.
\end{proposition}

We conclude this section with a result ensuring that the uniqueness condition~\eqref{a: block weights} of
Theorem~\ref{T: S opt conds}
is satisfied for all \(q, s\in \{1,\dots, k\}\) such that \(q \neq s\);
we provide a proof in Appendix~\ref{A_unique}.

\begin{proposition}
\label{prop:unique}
	Suppose that
	\begin{equation} \label{eq: unique bound}
		\gamma \ge
		12 \max \bra{ \sqrt{ \frac{\tilde \sigma^2 \log n}{\hat r^2}},
		\frac{\log n}{\hat r^2} }.
	\end{equation}
	Then \(r_s \e^T \W_{C_q, C_q} \e > r_q \e^T \W_{C_q, C_s} \e \)
	for all \(q,s \in \{1,\dots, k\}\) such that \(q \neq s\) with high probability.
\end{proposition}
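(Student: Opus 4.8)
The plan is to prove \eqref{a: block weights} by a two-sided concentration estimate applied to each fixed pair $q \ne s$ in $\{1,\dots,k\}$: the two quantities in \eqref{a: block weights} concentrate around means that differ by a strictly positive amount of order $\gamma r_q^2 r_s$, and the hypothesis \eqref{eq: unique bound} is precisely what forces this gap to dominate the fluctuations. First I would record the means. Under the planted cluster model the entries of $\W$ indexed by $C_q \times C_q$ have mean $\alpha$ and those indexed by $C_q \times C_s$ have mean $\beta$, so $\E[\, \e^T \W_{C_q,C_q}\e \,] = \alpha r_q^2$ and $\E[\, \e^T\W_{C_q,C_s}\e \,] = \beta r_q r_s$, and hence
\[
\E\bigl[\, r_s\, \e^T\W_{C_q,C_q}\e - r_q\, \e^T\W_{C_q,C_s}\e \,\bigr] = (\alpha-\beta)\, r_q^2 r_s = \gamma\, r_q^2 r_s > 0 .
\]
(If one prefers a model with a deterministic or absent diagonal, the first expectation shifts by only an $O(\alpha r_q)$ term of lower order, which is absorbed into the constants below.)

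Next I would apply the Bernstein bound of Theorem~\ref{thm: Hoeffding} with $T = n$ to each sum: $\e^T\W_{C_q,C_q}\e$ can be written as a sum of independent bounded variables over the unordered pairs of $C_q$ together with the diagonal, and $\e^T\W_{C_q,C_s}\e$ as a sum of the $r_q r_s$ independent bounded entries of the $(C_q,C_s)$ block. This yields, with probability at least $1 - O(n^{-6})$,
\[
\bigl|\, \e^T\W_{C_q,C_q}\e - \alpha r_q^2 \,\bigr| \le c\max\bra{ \sigma_1 r_q \sqrt{\log n},\; \log n } , \qquad \bigl|\, \e^T\W_{C_q,C_s}\e - \beta r_q r_s \,\bigr| \le c\max\bra{ \sigma_2 \sqrt{r_q r_s \log n},\; \log n } ,
\]
for a universal constant $c$ absorbing the factor $6$ of Theorem~\ref{thm: Hoeffding} and the factor $2$ picked up when $\e^T\W_{C_q,C_q}\e$ is re-expressed as a sum over unordered pairs. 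Combining these two estimates with the expectation computation gives, on this event,
\[
r_s\, \e^T\W_{C_q,C_q}\e - r_q\, \e^T\W_{C_q,C_s}\e \;\ge\; \gamma r_q^2 r_s - c\, r_s\max\bra{ \sigma_1 r_q\sqrt{\log n},\; \log n } - c\, r_q\max\bra{ \sigma_2\sqrt{r_q r_s\log n},\; \log n } .
\]

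To finish, I would bound each of the two error terms by $\tfrac12 \gamma r_q^2 r_s$. Using $r_q, r_s \ge \hat r$, controlling the first error term reduces to the inequalities $\gamma r_q \ge 2c\,\sigma_1\sqrt{\log n}$ and $\gamma r_q^2 \ge 2c\log n$, while controlling the second (after dividing through by $r_q$) reduces to $\gamma\sqrt{r_q r_s} \ge 2c\,\sigma_2\sqrt{\log n}$ and $\gamma r_q r_s \ge 2c\log n$; since $r_q,r_s \ge \hat r$ gives $r_q \ge \hat r$, $\sqrt{r_q r_s} \ge \hat r$, and $r_q r_s \ge \hat r^2$, and since $\sigma_1,\sigma_2 \le \tilde\sigma$, all four follow from \eqref{eq: unique bound} once its constant is taken large enough relative to $c$ --- this is where the explicit coefficient $12$ is pinned down. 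A union bound over the at most $k(k-1) \le n^2$ ordered pairs $(q,s)$ with $q\ne s$, each failing with probability $O(n^{-6})$, then gives total failure probability $O(n^{-4})$, so \eqref{a: block weights} holds simultaneously for all such pairs with high probability.

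This is a routine concentration argument, so I do not anticipate a genuine obstacle. The only point needing care is matching the two Bernstein deviations to the two branches of the maximum in \eqref{eq: unique bound}: for the between-block deviation one must divide through by the full factor $r_q^2 r_s$, so that the quantity controlling it is $\gamma\sqrt{r_q r_s}\ge\gamma\hat r$ and strongly unequal cluster sizes cause no harm, whereas the within-block deviation is controlled by $\gamma r_q\ge\gamma\hat r$; and for both error terms one must keep the $\log n$ branch of the Bernstein bound, since that is what produces the $\log n/\hat r^2$ branch of \eqref{eq: unique bound}. Everything else, including the precise value of the constant in \eqref{eq: unique bound}, is mechanical.
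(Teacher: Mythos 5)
Your proposal is correct and follows essentially the same route as the paper's own proof: Bernstein bounds on $\e^T\W_{C_q,C_q}\e$ and $\e^T\W_{C_q,C_s}\e$ about their means $\alpha r_q^2$ and $\beta r_q r_s$, followed by factoring out $r_q^2 r_s$, using $r_q, r_s \ge \hat r$, and a union bound over pairs. The only difference is cosmetic bookkeeping of the constant (the paper sums the two deviations directly to get the coefficient $12$, while you bound each by half the main term).
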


\subsection{A Bound on \(\bs{\tilde S}\)}
\label{sec: S}
It remains to establish the following bound on the spectral norm of the matrix \(\St\).

\begin{proposition} \label{prop: S bound}
	There exists scalars $C, C' > 0$ such that
	\begin{equation} \label{eq: S bound}
	\|\St\| \leq C   \max  \bra {\tilde \sigma\sqrt{n},  \sqrt{\log n}}
	+ C' \rbra{ \max \bra{\hat\sigma^2, \frac{\log n}{\hat r} } k r_{k+1} }^{1/2}
	+ \mu_{k+1, k+1} r_{k+1},
\end{equation}
	where $\hat \sigma^2 = \max_{q=1,\dots,k}\bra{\sigma^{2}_{qq}}$,
	with high probability.
\end{proposition}

The proof of Proposition~\ref{prop: S bound} follows the same structure as that of  \citet[Lemma~4.5]{ames2014guaranteed}.
In particular, we decompose \(\St\) as \(\St = \St_1 + \St_2 + \St_3,\) where
\begin{align}
\St_1 & = \E[ \W ] - \W,  \label{eq: St1 def}\\
[\St_2]_{C_q, C_s} & =
\begin{cases}  \left(\bs{\lambda}_{C_q}-\E\sbra{\bs{\lambda}_{C_q}}\right)\bs{e}^T,
& \mbox{if } s = k+1, \\
\bs{e}\left(\bs{\lambda}_{C_s}-\E\sbra{\bs{\lambda}_{C_s}}\right)^T, & \mbox{if } q = k+1,  \\
\bs{0}, & \mbox{otherwise, }\end{cases} \label{eq: St2 def} \\
[\St_3]_{C_q, C_s} &= 	\begin{cases} -\mu_{k+1, k+1} \bs{e}\bs{e}^T, & \mbox{if } q=s=k+1, \\
\bs{0}, & \mbox{otherwise.} \end{cases}
\end{align}

Note that $\|\St_3\| = \mu_{k+1, k+1}\|\bs{e}\bs{e}^T\|=\mu_{k+1, k+1} r_{k+1}$.
The following lemmas provide the necessary bounds on $\|\St_1\|$ and $\|\St_2\|$.

\begin{lemma} \label{lem: S1}
	Suppose that \(\St_1\) is constructed according to \eqref{eq: St1 def}
	for some \(\W\in \Sigma^n\) sampled from the heterogeneous planted cluster model.
	Then there exists constant $C > 0$ such that
	\begin{equation} \label{eq: S1 bound}
	\|\St_1\| \le C   \max  \bra {\tilde \sigma\sqrt{n},  \sqrt{\log n}},
	\end{equation}
	with high probability.
\end{lemma}

\begin{lemma} \label{lem: S2}
	Suppose that \(\St_2\) is constructed according to \eqref{eq: St2 def}
	for some \(\W\in \Sigma^n\) sampled from the heterogeneous planted cluster model.
	Then there exists constant $C'> 0$ such that
	\begin{equation*} 
		\|\St_2 \| \le C' \rbra{ \max \bra{\hat\sigma^2, \frac{\log n}{\hat r} } k r_{k+1} }^{1/2},
	\end{equation*}
	with high probability, where $\hat \sigma := \max_{q=1,\dots, k} \sigma_{qq}$.
\end{lemma}

We delay the proof of Lemmas~\ref{lem: S1} and~\ref{lem: S2} until Appendix~\ref{A_S1_bound}
and Appendix~\ref{A_S2_bound}, respectively.
Combining the three bounds on \(\|\St_1\|,\) \(\|\St_2\|\), and \(\|\St_3\|\) and
applying the triangle inequality one last time shows that~\eqref{eq: S bound} holds
with high probability.

\subsection{The Conclusion of the Proof}
\label{sec: wrap}
According to Theorem~\ref{T: S opt conds}, it suffices to prove that
$\|\St\| \leq \epsilon \gamma \hat{r}$ is satisfied with high probability in order to prove Theorem~\ref{thm: rec}.
According to Proposition~\ref{prop: S bound}, if
\begin{equation} \label{eq:eps_req1}
\resizebox{0.92\textwidth}{!}{$
	\gamma\epsilon \hat r \geq  C \max\left\lbrace \tilde \sigma \sqrt{n},  \sqrt{\log n} \right\rbrace
+ C' \rbra{ \max \bra{\hat\sigma^2, \frac{\log n}{\hat r} } k r_{k+1} }^{1/2} + \mu_{k+1, k+1} r_{k+1},
	$}
\end{equation}
then $\|\St\| \leq \epsilon \gamma \hat{r}$ holds with high probability.
Hence, we have three conditions, \eqref{eq: xi eps}, \eqref{eq: lam eps} and \eqref{eq:eps_req1}, on $\epsilon > 0$ that need to be satisfied simultaneously;
choosing any \(\epsilon > 0\) satisfying all three establishes the desired recovery guarantee.
We see that   \eqref{eq: xi eps} and \eqref{eq:eps_req1} can be simultaneously fulfilled if
\[ \resizebox{\textwidth}{!}{\(
	1 - \frac{c}{\gamma}\max \bra{ \sqrt{ \frac{\tilde \sigma^2 \log n}{\hat r}} ,
			\frac{\log n}{\hat r}}
			\geq
			\frac{1}{\gamma \hat{r}}\left( C \max\left\lbrace \tilde \sigma \sqrt{n},  \sqrt{\log n} \right\rbrace
		+ C' \rbra{ \max \bra{\hat\sigma^2, \frac{\log n}{\hat r} } k r_{k+1} }^{1/2} + \mu_{k+1, k+1} r_{k+1}\right) \)},
\]
which holds if and only if
\begin{equation} \label{eq:general_low_bound_r}
\resizebox{0.91\textwidth}{!}{\(
	 \hat r \rbra{\gamma - c \max \bra{ \sqrt{ \frac{\tilde \sigma^2 \log n}{\hat r}} , \frac{\log n}{\hat r} }  }
			\geq   C \max\left\lbrace \tilde \sigma \sqrt{n},  \sqrt{\log n} \right\rbrace
		+ C' \rbra{ \max \bra{\hat\sigma^2, \frac{\log n}{\hat r} } k r_{k+1} }^{1/2} + \mu_{k+1, k+1} r_{k+1}.
\)}
\end{equation}
Next, we see that \eqref{eq:eps_req1} and \eqref{eq: lam eps} are simultaneously fulfilled if
\[
\resizebox{\textwidth}{!}{\(
		\frac{1}{2\gamma} \rbra{ \mu_{qq}  - c' \max \bra{ \sqrt{  \frac{\sigma_{qq}^2 \log n }{\hat r} }, \frac{\log n }{\hat r} }  }
			\geq
			\frac{1}{\gamma \hat{r}} \rbra{  C \max\left\lbrace \tilde \sigma \sqrt{n},  \sqrt{\log n} \right\rbrace
		+ C' \rbra{ \max \bra{\hat\sigma^2, \frac{\log n}{\hat r} } k r_{k+1} }^{1/2} + \mu_{k+1, k+1} r_{k+1} }
\)},
\]
which holds if and only if
\begin{equation} \label{eq:general:low_bound_r2}
\resizebox{0.91\textwidth}{!}{\(
	\hat r \rbra{ \mu_{qq}  - c' \max \bra{ \sqrt{  \frac{\sigma_{qq}^2 \log n }{r_{qq}} }, \frac{\log n }{r_{qq}} }  }
			\geq
			 2\rbra{  C \max\left\lbrace \tilde \sigma \sqrt{n},  \sqrt{\log n} \right\rbrace
	 + C' \rbra{ \max \bra{\hat\sigma^2, \frac{\log n}{\hat r} } k r_{k+1} }^{1/2} + \mu_{k+1, k+1} r_{k+1} }
\)}.
\end{equation}
Finally, suppose that we choose the parameter $c_4 > \max\{c, c', 12\}$ so that gap condition~\eqref{eq: unique bound}} is satisfied  and
\[
	\gamma > \max\{c, c', 12\} \max \bra{ \sqrt{  \frac{\tilde \sigma^2 \log n }{\hat r} }, \frac{\log n }{\hat r} }.
\]
Then there exist constants $c_1, c_2, c_3$, depending on $c_4$, such that
\eqref{eq:general_low_bound_r} and \eqref{eq:general:low_bound_r2} are satisfied, i.e., there exists
$\epsilon$ satisfying   \eqref{eq: xi eps}, \eqref{eq: lam eps} and
\eqref{eq:eps_req1}  simultaneously, if
\begin{align*}
	\gamma \hat r  \ge  c_1  &\max\left\lbrace \tilde \sigma\sqrt{n},  \sqrt{\log n} \right\rbrace
			+ c_2 \rbra{ \max \bra{\sigma_1^2, \frac{\log n}{\hat r} } k r_{k+1} }^{1/2} + c_3 \mu_{k+1, k+1} r_{k+1} .
\end{align*}
This concludes the proof of Theorem~\ref{thm: rec}. 

\section{Numerical Methods and Simulations}
\label{sec: num}
We conclude with a discussion of an algorithm for solution of~\eqref{eq: kdc sdp} based on the alternating direction method of multipliers (ADMM),  and provide the results of a series of experiments that empirically verify the phase transitions predicted in Section~\ref{sec: rec}.
In particular, we randomly sample graphs \(G = (V,\W)\) from the planted cluster model and compare the
optimal solution of \eqref{eq: kdc sdp} with the planted partition.

\subsection{Alternating Direction Method of Multipliers for the Densest $k$-Disjoint Clique Problem}
We solve \eqref{eq: kdc sdp} iteratively using the algorithm proposed by \cite{ames2014guaranteed}.
Specifically, we split the decision variable \eq{\X} to obtain the equivalent formulation
\begin{equation*} 
\max \bra{ \tr(\W\Y) : \X - \Y = \0, \X\e \le \e, \X \ge \0, \; \tr \Y = k, \Y \in \Sigma^V_+ }.
\end{equation*}
We then apply an approximate dual ascent scheme to maximize the augmented Lagrangian
\[
L_\rho(\X,\Y,\Z) = \tr(\W\Y) - \tr(\Z(\X-\Y) ) + \frac{\rho}{2} \| \X - \Y\|^2_F,
\]
where \(\rho > 0\) is a penalty parameter for violation of the linear equality constraint  \(\X-\Y = \0\).
In particular, we minimize \(L_\rho\) with respect to \(\Y\) and \(\X\) successively, and then update
\(\Z = \Z - \rho(\X-\Y)\) using approximate gradient ascent.

We update $\Y$ as the minimizer of the subproblem
\begin{equation*} 
	\Y^{t+1} = \argmin_{\Y \in \Sigma^n_+} \bra{  \left\| \Y - \rbra{ \X^t - \frac{\W + \Z^t}{\rho}}\right\|^2_F : \tr \Y = k},
\end{equation*}
where $(\X^t, \Y^t, \Z^t)$ is the current iterate after $t$ iterations.
That is, $\Y^{t+1}$ is the projection of the matrix $\bs{U}^t := \X^t - (\W + \Z^t)/\rho$ onto the intersection of the positive semidefinite cone and the set of matrices with trace equal to zero.
Such a projection can be computed explicitly by projecting the vector of eigenvalues $\bs{\lambda}^t$ of $\bs{U}^t$ onto the nonnegative simplex $\{\y \in \R^n: \e^T \y = k, \; \y \ge \bs 0\}$.
\citet[Proposition~2.6] {zhang2011penalty} and \citet{van2008probing} can be consulted for further details.

We update $\X^{t+1}$ as the optimal solution of
\begin{equation}\label{eq:X-subprob}
	\X^{t+1} = \argmin_{\X \in \R^{n\times n}} \bra{ \left\|\X - \rbra{\Y^t + \Z^t/\rho} \right\|^2_F: \X \ge \0, \; \X \e \le \e }.
\end{equation}
Applying strong duality, we know that the optimal solution of~\eqref{eq:X-subprob} is given by
\begin{equation*} 
	\X^{t+1} = \sbra{ \rbra{\Y^{t+1} + \Z^t/\rho} - \frac{\z^* \e + \e (\z^*)^T}{2}}_+,
\end{equation*}
where the operator $[\cdot]_+$ is the projection onto the symmetric nonnegative cone $\Sigma^V \cap \R^{V\times V}_+$ given by $[[\Z]_+]_{ij} = \max \{0, Z_{ij}\}$ for all $\Z \in \Sigma^V$, and $\z^*$ is the optimal solution of the dual problem of~\eqref{eq:X-subprob} given by
\begin{equation}\label{eq:X-dual}
	\min_{\z \ge \0} \frac{1}{2} \left\|\sbra{ \rbra{\Y^{t+1} + \Z^t/\rho} - \frac{\z \e + \e \z^T}{2}}_+ \right\|_F^2 + \z^T \e - \frac{1}{2} \|\Y^{t+1} + \Z^t/\rho \|^2_F.
\end{equation}
The objective function of the dual problem~\eqref{eq:X-dual} is differentiable and coercive in $\z$, so it can be solved efficiently by applying the spectral projected gradient method of~\cite{birgin2000nonmonotone}.
We complete each iteration by performing an approximate dual ascent step to update the dual variable $\Z^{t+1}$.
We stop the projected gradient method when the relative duality gap, given by $|v_p^{(t)} - v_d^{(t)}|/\max\{v_p^{(t)}, 1\}$, and primal constraint violation are both smaller than a desired error tolerance.
We summarize the algorithm as Algorithm~\ref{alg:ADMM}.
Please see the work of \citet[Section 6]{ames2014guaranteed} for further implementation details.

\newcommand{\V}{\bs{V}}
\begin{algorithm}[h!]
	\caption{ADMM for~\eqref{eq: cluster SDP}}
	\label{alg:ADMM}
	\begin{algorithmic}

	\algrenewcommand\algorithmicfor{For}
	\algrenewcommand\algorithmicdo{}
	\algrenewcommand\algorithmicend{End}

	\State{\textbf{Input:} Initial iterates $\X^0 = \Y^0 = \Z^0 = \0$, augmented Lagrangian parameter $\rho > 0$, and stopping tolerance $\epsilon > 0$.}

	\State{\textbf{Output:} Approximate solution $(\X^*, \Y^*, \Z^*)$ of \eqref{eq: cluster SDP}.}

	\For{$t = 0, 1,2 \dots $ until converged}
	\State
		Compute spectral decomposition $\V^t \Diag \bs{\lambda}^t (\V^t)^T = \bs U^t = \X^t - (\W + \Z^t)/\rho$.
	\State
		Project  $\bs \lambda^t$ onto the nonnegative simplex $\{\y \in \R^n: \e^T \y = k,\; \y \ge \0\}$ to obtain $\bs{\bar\lambda}^t$.
	\State
		Update $\Y^{t+1} = \V^t \Diag \bs{\bar\lambda}^t (\V^t)^T$.
	\parState{Compute approximate optimal solution $\z^*$ of the dual subproblem~\eqref{eq:X-dual} using spectral projected gradient method of~\cite{birgin2000nonmonotone}.}
	\State
		Update $\X^{t+1}= \sbra{ \rbra{\Y^{ts+1} + \Z^t/\rho} - \frac{\z^* \e + \e (\z^*)^T}{2}}_+.$
	\State
		Update $\Z^{t+1}$ using approximate dual ascent
		\[
			\Z^{t+1} = \Z^t - \rho(\X^{t+1} - \Y^{t+1}).
		\]
	\State
		Compute primal feasibilty gap
		\[
			pfeas = \min\bra{\min_{ij} Y^t_{ij}, \min \rbra{\e - \Y^{t}\e} }.
		\]
	\parState{Compute estimates of primal and dual objective values
    		(note that $v_d^{(t+1)}$ is not necessarily a lower bound on the optimal dual value, but is asymptotically converging to the optimal dual value):
		\[
			v_p^{(t+1)} = \tr(\W\Y^{t}) \hspace{0.25in}
			v_d^{(t+1)} = k \lambda_{min}(\W + \Z^{t+1}) - \tr(\X^{t+1}\Z^{t+1}).
		\]    }
	\State
		Calculate relative duality gap
		\[
			relgap = \frac{|v_p^{(t+1)} - v_d^{(t+1)} |}{\max\bra{|v_p^{(t+1)}|, 1} }.
		\]
	\State
		Declare sequence of iterates to have converged if $relgap < \epsilon$ and $pfeas >- \epsilon$.
	\EndFor
	\end{algorithmic}
\end{algorithm}

\subsection{Empirical Verification of Exact Recovery}

We perform two sets of experiments, one to illustrate the recovery guarantee for dense graphs sampled from the heterogeneous planted cluster model and another to illustrate the guarantee
when the noise is sparse.
For the dense graph experiments, we fix \(n = 1000\), and sample \(10\) graphs
from the heterogeneous planted cluster model corresponding to the Bernoulli
distributions \(\Omega_{ij} = Bern(p_{ij})\)  with probabilities of success $p_{ij}$ given by
\[
	p_{ij} := \begin{cases} \rbra{ 1 - \rbra{\frac{0.35}{k+1}}i } p, &\text{if } i = j, \\
								\rbra{ 1 - \rbra{\frac{0.35}{k+1}} \min\{i,j\} } q, &\text{if } i \neq j,
							\end{cases}
\]
for $q = 0.25$ and each $p=\{0.25, 0.275, 0.3, \dots, 0.975, 1\}$ and
\(\hat r \in \{20, 40, \dots, 500\}\).
We choose the number of clusters $k=\lfloor n/\hat r \rfloor$
and distribute the remaining $n - k\hat r$ nodes
evenly among $k-1$ clusters to ensure that at least one cluster
has minimum size.
Under this choice of $p_{ij}$ the smallest gap between the in-cluster and between-cluster means occurs when $i=1$ and $j = k$; this implies that
\begin{equation*} 
	\gamma = \rbra{ 1 - \frac{0.35 k }{k+1}}p - q.
\end{equation*}
For each graph \(G\), we call the ADMM algorithm sketched above to solve \eqref{eq: kdc sdp};
in the algorithm, we use penalty parameter \(\rho = \min \bra{ \max \bra{ 5n/k, 80}, 500}/2 \),
stopping tolerance \(\epsilon = 10^{-4}\), and maximum number of iterations \(100\).
We declare the block structure of \(G\) to be recovered if \(\|\X^* - \X_0\|^2_F / \|\X_0\|^2_F < 10^{-3} \),
where \(\X^* \) is the solution returned by the ADMM algorithm and \(\X_0\) is the proposed solution
given by \eqref{eq: cluster X}.
Note that Theorem~\ref{thm: rec} implies that we should expect exact recovery (w.h.p.) provided that
$
	\gamma \hat r = \Omega\rbra{\sqrt{\tilde \sigma^2n}}.
$
Figure~\ref{fig: rec}(a) illustrates the empirical success rate for each choice of \(\hat r\) and \(p\), as well as the curve \( p =\big((k+1)/(0.65k + 1)\big)(q + \half {\sqrt{n}}),\) where we use the upper bound $\tilde \sigma^2 \le 1/4$ to estimate the constant term in~\eqref{eq:smallest_largest}.

\begin{figure}[t]
	\centering
	\subfloat[{Dense Noise}]{\includegraphics[width=0.48\textwidth]{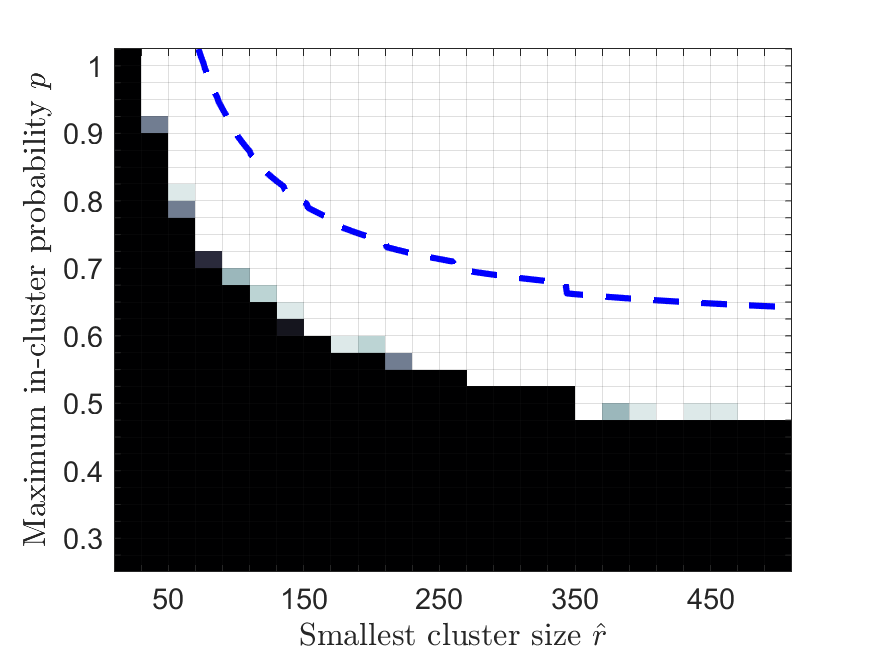} }
	\subfloat[{Sparse Noise}]{\includegraphics[width=0.48\textwidth]{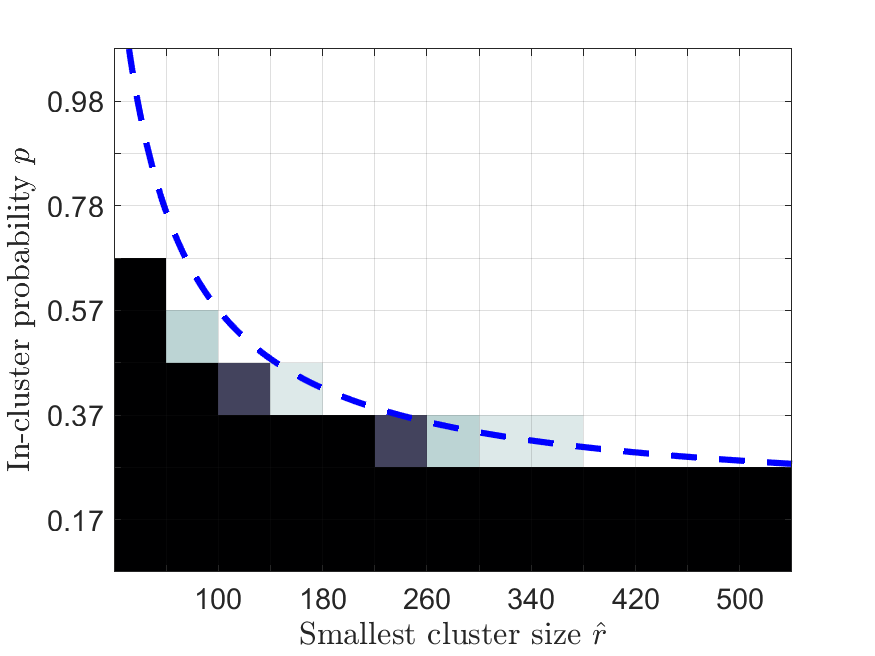}}
		\caption{Empirical recovery rate for $n$-node graph with $k$ planted cliques of size at least $\hat r$ and \(\W\) generated
		according to the planted heterogeneous cluster model with distributions $\Omega_{ij} = Bern(p_{ij})$. Brighter colors indicate higher
		rates of recovery, with black corresponding to 0 recoveries and white corresponding to 10 recoveries (out of 10 trials).
		The dashed curves indicate the phase transition to perfect recovery predicted by Theorem~\ref{thm: rec}.}
	\label{fig: rec}
\end{figure}

We perform identical experiments for graphs sampled from the homogeneous planted cluster model with sparse noise.
In particular, we fix \( n =1000\) and set $q = 1/\sqrt{n}$. We then sample \(10\) graphs
from the planted cluster model corresponding to the Bernoulli
distributions \(\Omega_{ij} = Bern(p)\) if $i =j$ and \(\Omega_{ij} = Bern(q)\) if $i\neq j$ for each
\( \hat r \in \{20,60,\dots, 440, 500\}\) and $p = tq$ for $10$ equally spaced scaling factors $t$ between $2$ and $\floor{\sqrt{n}}$.
As before, we set $k = \floor{n/\hat r}$ and distribute the remaining nodes equally amongst the clusters so that the smallest has size $\hat{r}$ and $r_{k+1} = 0$.
For each graph \(G\), we call the ADMM algorithm to solve \eqref{eq: kdc sdp} (with the same parameters as before)
and declare the block structure of \(G\) recovered if \(\|\X^* - \X_0\|^2_F / \|\X_0\|^2_F < 10^{-3} \).
Theorem~\ref{thm: rec} suggests that we should expect recovery of the cluster structure in the case that
\[
	p >  \Omega\rbra{\frac{1}{\sqrt{n}} + \frac{n^{1/4}}{\hat r}},
\]
for this particular choice of $p$ and $q$.
Note that this implies that we have perfect recovery (w.h.p.) for $\hat r = \Omega(n^{1/4})$, rather than $\Omega(\sqrt{n})$ (as observed in the dense case).
Figure~\ref{fig: rec}(b) provides the empirical success rate for each choice of \(\hat r\) and \(p\), as well as the curve \(p = 1/\sqrt{n} + n^{1/4}/\hat{r}\).
It is clear that we are able to recover significantly smaller clusters under sparse noise than under dense noise, in accordance with \eqref{eq:smallest_largest}.

\section{Conclusions}
\label{sec: con}
We have established theoretical guarantees for graph clustering via a semidefinite relaxation of the densest \(k\)-disjoint problem.
These results add to the growing corpus of evidence that clustering, while intractable in general,
is possible if we seek to cluster clusterable data, i.e., data consisting of well-defined and well-separated groups of similar items.
Moreover, our results provide further evidence that the \(\omega(\sqrt{n})\) barrier can be broken for perfect cluster recovery in approximately sparse graphs and, specifically, that the size of recoverable clusters
scales logarithmically with \(n\) at worst in the special case that all clusters
are roughly the same size.
Finally, our semidefinite relaxation requires only an estimate of the
number of clusters present in the data as input.

Our results suggest several areas of further research.
The numerical simulations suggest that our theoretical guarantees may be overly conservative, especially in the dense noise case;
further investigation is needed to determine if tighter estimates on the minimum size of clusters efficiently recoverable
exist.
Moreover, our model assumes clusters are disjoint. This is clearly not met in many practical applications; for example,
returning to the social networking realm,
users may belong to several overlapping communities.
It would be worthwhile to see how our model and recovery guarantees can be modified to address overlapping clusters.
Finally, our algorithm for graph clustering requires the solution of a semidefinite program, which may be impractical for
even moderately large graphs.
For example, the proposed algorithm, based on the ADMM, has per-iteration cost of $O(n^3)$ flops per iteration, primarily to compute the spectral decomposition needed to update $\Y$. Classical methods based on interior-point methods will scale even more poorly. Efficient, scalable methods for solving this semidefinite relaxation, and semidefinite programming in general, are needed.

\subsubsection*{Acknowledgements}
{We are grateful to John Bruer and Joel Tropp for their insights and helpful suggestions.
Aleksis Pirinen was supported by a California Institute of Technology Summer Undergraduate Research Fellowship (SURF) using funds provided by Office of Naval Research (ONR) award N000014-11-1002. Brendan Ames was supported by University of Alabama Research Grants RG14678 and RG14838.}
%
\appendix

\section{Proof of Lemma~\ref{lem: yz bound}}
\label{app: yz}
We give the full proof of Lemma~\ref{lem: yz bound} in this appendix.
\begin{proof}
We fix $q,s \in \{1,\dots,k\}$ such that $q \neq s$ and assume without loss of generality that $r_q \leq r_s$.
By the definition \eqref{eq: y z} of $\bs{y} := \bs{y}^{q,s}$ and the triangle inequality, we have
\begin{equation*}
	\|\bs{y}\|_{\infty} \leq \frac{1}{r_s}\left( \|\bs{b}_{q,s}\|_{\infty} + \frac{|\bs{b}_{q,s}^T \bs{e}|}{r_q +r_s}\right).
\end{equation*}
For simplicity, let \(\b_1:= \b_{q,s}\) and \(\b_2 := \b_{s,q}\).
It follows from \eqref{eq: b} and our choice of $\bs{\lambda}$  that the $i$th element of $\bs{b}_1$, denoted $b_i^1$, is given by
$$
	b_i^1 = r_s \left( \lambda_i -\frac{1}{2r_q}(\mu_{qq} r_q - \tau) \right) + \left(\bs{\lambda}_{C_s}^T\bs{e} - \frac{1}{2}(\mu_{ss} r_s - \tau) \right) - \left( \sum_{j \in C_s} w_{ij} - \mu_{qs} r_s \right).
$$
It follows from the definition \eqref{eq: lam q} of $\bs{\lambda}_{C_s}$  that
$$
	\bs{\lambda}_{C_s}^T\bs{e} = \frac{1}{2 r_s}\left(\bs{e}^T\bs{W}_{C_s,C_s}\bs{e} - r_s \tau \right),
$$
which implies that
$$
	\left| \bs{\lambda}_{C_s}^T\bs{e} - \frac{1}{2}(\mu_{ss} r_s - \tau)  \right| = \frac{1}{2r_s}\left|\bs{e}^T\bs{W}_{C_s,C_s}\bs{e}-\mu_{ss} r_s^2 \right|.
$$
Applying \eqref{eq:hoeffding} with $T = n$ to the right-hand side in the equation above shows that
\begin{equation} \label{cluster sum}
	|\e^T \W_{C_s, C_s} \e - \mu_{ss} r_s^2 |
	 \le 6 \max \{ \sqrt{\sigma_{ss}^2 r_s^2 \log n}, \log n \},
\end{equation}
with high probability,
which in turn implies that
\begin{equation*} 
	\left| \bs{\lambda}_{C_s}^T\bs{e} - \frac{1}{2}(\mu_{ss} r_s - \tau)
	\right| \le 3 \max\bra{ \sqrt{\sigma_{ss}^2  \log n}, \frac{\log n}{r_s} },
\end{equation*}
with high probability.
Similarly, applying \eqref{eq:hoeffding} with $T=n$ to the sum $\sum_{j\in C_s} w_{ij}$ shows that
\begin{equation*} 
	\abs{ \sum_{j\in C_s} w_{ij} - \mu_{qs} r_s }
		\le 6 \max \bra{ \sqrt{\sigma^2_{qs} r_s \log n}, \log n },
\end{equation*}
for all $i \in C_q$, with high probability.
Finally, we note that
\begin{equation*}
\left|\lambda_i - \frac{1}{2r_q}(\mu_{qq} r_q - \tau)\right| \leq \frac{1}{r_q}\left|\sum_{j \in C_q} w_{ij} - \mu_{qq} r_q \right| + \frac{1}{2r_q^2}\left|\bs{e}^T \bs{W}_{C_q,C_q} \bs{e} - \mu_{qq} r_q^2\right|.
\end{equation*}
We bound the first term in the sum using \eqref{eq:hoeffding} with $T=n$,
which establishes that
\begin{align*}
 \left|\sum_{j \in C_q} w_{ij}- \mu_{qq} r_q\right|
 	\le 6 \max \bra{ \sqrt{\sigma^2_{qq} r_q \log n}, \log n },
\end{align*}
w.h.p.,~and note that the second term has upper bound
\begin{align*}
 \left| \bs{e}^T \bs{W}_{C_q,C_q} \bs{e} - \mu_{qq} r_q^2 \right|
\le 6 \max \bra{\sqrt{\sigma_{qq}^2 r_q^2 \log n}, \log n },
\end{align*}
w.h.p.,~by a calculation identical to that used to obtain~\eqref{cluster sum}.
Applying these bounds using the triangle inequality and the
union bound over all $i \in C_q$, we conclude that
\begin{align}
&\|\bs{b}_1\|_{\infty} \leq r_s \left|\lambda_i -\frac{1}{2r_q}(\mu_{qq} r_q - \tau)\right|
+ \left|\bs{\lambda}_{C_s}^T\bs{e} - \frac{1}{2}(\mu_{ss} r_s - \tau)\right| + \left|\sum_{j \in C_s} w_{ij} - \mu_{qs} r_s\right| \notag\\
 & \leq  \;r_s \left(\frac{6}{r_q}\max \bra{ \sqrt{\sigma^2_{qq} r_q \log n}, \log n } + 3 \max\bra{ \sqrt{\sigma_{qq}^2  \log n}, \frac{\log n}{r_q}} \right) \notag \\
& \hspace{0.55in} + 3  \max\bra{ \sqrt{\sigma_{ss}^2  \log n}, \frac{\log n}{r_s}}
+ 6 \max\bra{ \sqrt{\sigma_{qs}^2  r_s \log n}, \log n}  \notag\\
 & =  O \rbra{ r_s \max \bra{  \sqrt{ \frac{ \tilde{\sigma}^2 \log n}{\hat{r}} }, \frac{ \log n}{\hat{r}} } }, \label{eq:b1_inf_orig}
\end{align}
with high probability.

We next bound $\left|\bs{b}_1^T \bs{e}\right|$. We have
$$ \bs{b}_1^T \bs{e} = r_s \left(\bs{\lambda}_{C_q}^T \bs{e} - \frac{1}{2}\left(\mu_{qq} r_q - \tau\right)\right) + r_q \left(\bs{\lambda}_{C_s}^T \bs{e} - \frac{1}{2}\left(\mu_{ss} r_s - \tau\right)\right) + \left(\mu_{qs} r_s r_q - \bs{e}^T \bs{W}_{C_q,C_s}\bs{e}\right).$$

Applying \eqref{eq:hoeffding} to bound the sum of the entries of
$\bs{W}_{C_q,C_s}$ and the above concentration inequalities
for $\bs{\lambda}_{C_q}^T \e$ and $\bs{\lambda}_{C_s}^T \e$
we have
\begin{align}
	 \left|\bs{b}_1^T \bs{e}\right| &
		\leq r_s \left|\bs{\lambda}_{C_q}^T \bs{e} - \frac{1}
		{2}\left(\mu_{qq} r_q - \tau \right)\right|
		+ r_q\left|\bs{\lambda}_{C_s}^T \bs{e}
		- \frac{1}{2}\left(\mu_{ss} r_s - \tau\right)\right|
		+ \left|\mu_{qs} r_s r_q - \bs{e}^T \bs{W}_{C_q,C_s}\bs{e}\right|
		\notag \\
		& \leq 3 r_s \max\bra{ \sqrt{\sigma_{qq}^2 \log n},
			\frac{\log n}{r_q} }
			+ 3 r_q \max\bra{ \sqrt{\sigma_{ss}^2 \log n},
			\frac{\log n}{r_s} } \notag \\
    & \hspace{0.95in}
			+  6 \max \bra{ \sqrt{\sigma_{qs}^2 r_q r_s \log n}, \log n}
		\notag \\
		& = O \rbra{ r_s\max\bra{ \sqrt{\tilde \sigma^2 \log n},
			\frac{\log n}{\hat r} }  },\label{eq:b1_e_orig}
\end{align}
w.h.p.
Finally, we bound $\|\bs{y}\|_{\infty}$  using \eqref{eq:b1_inf_orig} and \eqref{eq:b1_e_orig}:
$$
	\|\bs{y}^{q,s}\|_{\infty}
	\leq \frac{1}{r_s}\left( \|\bs{b}_1\|_{\infty}
		+ \frac{|\bs{b}_1^T \bs{e}|}{r_q +r_s}\right)
	= O \rbra{ \max \bra{  \sqrt{ \frac{ \tilde{\sigma}^2 \log n}{\hat{r}} },
		\frac{ \log n}{\hat{r}} } },
$$
w.h.p. Since this holds for any $q,s \in\{1,\dots,k\}$ such that $q \neq s$, we conclude that
\begin{equation*}
\|\bs{y}\|_{\infty} = O \rbra{ \max \bra{  \sqrt{ \frac{ \tilde{\sigma}^2 \log n}{\hat{r}} },
		\frac{ \log n}{\hat{r}} } },
\end{equation*}
w.h.p.
An identical argument shows that
\begin{equation*}
	\|\bs{z}^{q,s}\|_{\infty}
	= O \rbra{ \max \bra{  \sqrt{ \frac{ \tilde{\sigma}^2 \log n}{\hat{r}} },
		\frac{ \log n}{\hat{r}} } },
\end{equation*}
w.h.p.
We conclude that
\begin{equation*}
	\|\bs{y}^{q,s}\|_{\infty} +  \|\bs{z}^{q,s}\|_{\infty}
	= O \rbra{ \max \bra{  \sqrt{ \frac{ \tilde{\sigma}^2 \log n}{\hat{r}} },
		\frac{ \log n}{\hat{r}} } },
\end{equation*}
w.h.p.
\end{proof}

\section{Proof of Proposition~\ref{prop: lam}}
\label{app: lam}
We next prove Proposition~\ref{prop: lam}.
\begin{proof}
We follow the proof of Lemma 4.3 given by \cite{ames2014guaranteed}. Fix $q \in \{1,\dots,k\}$ and $i \in C_q$.
It follows from \eqref{eq: lam q} that
\[
\lambda_i = \sum_{j \in C_q} w_{ij} - \frac{1}{2r_q}\bs{e}^T\bs{W}_{C_q,C_q}\bs{e}-\frac{\tau}{2},
\]
for each \(i \in C_q\).
Applying \eqref{eq:hoeffding} with ${S} = \sum_{j \in C_q} w_{ij}$ and $T = n$ yields
\[
	\sum_{j \in C_q} w_{ij} \geq \mu_{qq} r_q
		- 6 \max \bra{ \sqrt{ \sigma_{qq}^2 r_q \log n }, \log n},
\]
w.h.p. Moreover,~by a similar argument, we have
\[
	\frac{1}{2r_q}\bs{e}^T\bs{W}_{C_q,C_q}\bs{e}
	\leq \frac{1}{2}\left(\mu_{qq} r_q +
	6 \max\bra{ \sqrt{\sigma_{qq}^2 \log n}, \frac{\log n}{r_q} }\right),
\]
w.h.p.
Combining the above inequalities shows that
\[
	\lambda_i \geq	r_q \rbra{ \frac{\mu_{qq}}{2} - \epsilon \gamma
		- O \rbra{ \max \bra{ \sqrt{  \frac{\sigma_{qq}^2 \log n }{r_q} },
			\frac{\log n }{r_q} } } },
\]
w.h.p.
Since $\gamma > 0$ by~\eqref{eq:smallest_largest}, this implies that
there exists constant $c>0$ such that if
\begin{equation} \label{eq: lam eps tmp}
	\epsilon \leq \frac{1}{2\gamma}
	\rbra{ \mu_{qq}  - c \max \bra{ \sqrt{  \frac{\sigma_{qq}^2 \log 		n }{r_q} },
			\frac{\log n }{r_q} } },
\end{equation}
then
$\lambda_i \geq 0$ w.h.p.
Applying the union bound over all \(q =1,2,\dots, k\) and
\(i\in C_q\) shows that each entry of $\bs{\lambda}_{C_q}$ is nonnegative w.h.p.~if \(\epsilon\) is chosen to satisfy \eqref{eq: lam eps tmp} for all $q$.
\end{proof}

\section{Proof of Proposition~\ref{prop:unique}}
\label{A_unique}

Our proof of Proposition~\ref{prop:unique} follows a similar structure to that of \citet[Lemma~4.4]{ames2014guaranteed}.

\begin{proof}
	Fix \eq{q \neq s} with \eq{q \in 1, \dots, k}.
	Applying \eqref{cluster sum} and \eqref{eq:hoeffding}
	with $S = \e^T \W_{C_q, C_s} \e$ and $T = n$,
	we have
	\begin{align*}
		r_s &\e^T \W_{C_q, C_q} \e
		- r_q \e^T \W_{C_q, C_s} \e \\
		& \ge (\mu_{qq} - \mu_{qs}) r_s r_q^2
		- 6 r_s \max \bra{ \sqrt{ \sigma_{qq}^2 r_q^2 \log n}, \log n}
		- 6 r_q \max \bra{ \sqrt{\sigma_{qs}^2 r_s r_q \log n}, \log n} \\
		&\ge r_q^2 r_s \rbra{ \gamma - 12 \max \bra{ \sqrt{\frac{\tilde\sigma^2 \log n}{\hat r^2}}, \frac{\log n}{\hat r^2}} },
	\end{align*}
	w.h.p.
	This implies that $r_s \e^T \W_{C_q, C_q} \e \ge r_q \e^T \W_{C_q, C_s} \e$ w.h.p.~if \eqref{eq: unique bound} is satisfied.
\end{proof}

\section{Proof of  Lemma~\ref{lem: S1}} 
\label{A_S1_bound}

We next prove Lemma~\ref{lem: S1}.
\begin{proof}
We will make repeated use of the following
lemma, which specializes the concentration inequality on the spectral norm
of a random symmetric matrix with i.i.d.~mean zero entries given by \citet[Corollary 3.12]{bandeira2016sharp}.

\begin{lemma} \label{lem: tropp tech}
	Let $\bs{A} = [a_{ij}] \in \Sigma^n$ be a random symmetric matrix with i.i.d.~mean zero entries $a_{ij}$ having variance at most
	$\sigma^2$ and satisfying $|a_{ij}| \leq 1$. Then there exists constant $C> 0$ such that
	\begin{equation} \label{eq:sparse_lemma}
	\pr \left\lbrace \|\bs{A}\| > C \max\left\lbrace \sqrt{ \sigma^2 n}, \sqrt{T} \right\rbrace \right\rbrace \leq nT^{-7}
	\end{equation}
	for all $T > 0$.
\end{lemma}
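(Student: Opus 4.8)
The plan is to deduce Lemma~\ref{lem: tropp tech} directly from the sharp spectral-norm tail bound of \citet[Corollary 3.11]{bandeira2016sharp}, first by matching parameters and then by converting its sub-Gaussian tail into the stated polynomial tail. Recall that the Bandeira--van Handel bound for an $n \times n$ symmetric matrix with independent (modulo symmetry), centered entries is governed by the two quantities
\[
\bar\sigma := \max_i \sqrt{ \sum_j \E[a_{ij}^2] }, \qquad \sigma_* := \max_{ij} \|a_{ij}\|_\infty,
\]
and asserts that for each fixed $\epsilon \in (0,1/2]$ there is a universal constant $\tilde c_\epsilon > 0$ such that
\[
\pr\bra{ \|\bs A\| \ge (1+\epsilon) 2\bar\sigma + t } \le n \exp\rbra{ - \frac{t^2}{\tilde c_\epsilon \sigma_*^2} } \quad \text{for all } t \ge 0.
\]
For our matrix $\bs A$, the entries are i.i.d.\ with mean zero and variance $\sigma^2$, so each row satisfies $\sum_j \E[a_{ij}^2] = n\sigma^2$, giving $\bar\sigma = \sqrt{\sigma^2 n}$, and since $|a_{ij}| \le 1$ we have $\sigma_* \le 1$. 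The symmetry constraint $a_{ij} = a_{ji}$ together with the bounded, centered, identically distributed diagonal entries places $\bs A$ squarely in the independent-modulo-symmetry framework of \citet{bandeira2016sharp}.

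Fixing $\epsilon = 1/2$ so that $(1+\epsilon)2 = 3$, and writing $c := \tilde c_{1/2}$, the displayed bound together with $\sigma_* \le 1$ yields $\pr\{ \|\bs A\| \ge 3\sqrt{\sigma^2 n} + t \} \le n \exp(-t^2/c)$ for all $t \ge 0$. It then remains only to calibrate the deviation $t$ against $T$. I would take $t = C\max\{\sqrt{\sigma^2 n}, \sqrt{T}\} - 3\sqrt{\sigma^2 n}$, which is nonnegative once $C \ge 3$. For $0 < T \le 1$ the claim \eqref{eq:sparse_lemma} is vacuous, since then $nT^{-7} \ge n \ge 1$ already bounds any probability. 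For $T > 1$ I would split on which term achieves the maximum: when $\sqrt T \ge \sqrt{\sigma^2 n}$ one has $t \ge C\sqrt T - 3\sqrt T = (C-3)\sqrt T$, while when $\sqrt{\sigma^2 n} > \sqrt T$ one has $t = (C-3)\sqrt{\sigma^2 n} > (C-3)\sqrt T$; in either case $t \ge (C-3)\sqrt T$ whenever $C > 3$.

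Substituting $t \ge (C-3)\sqrt T$ into the sub-Gaussian tail gives $\pr\{ \|\bs A\| > C\max\{\sqrt{\sigma^2 n}, \sqrt T\} \} \le n \exp(-(C-3)^2 T / c)$, so it suffices to check $\exp(-(C-3)^2 T/c) \le T^{-7}$, i.e.\ $g(T) := (C-3)^2 T / c - 7\log T \ge 0$, for all $T > 1$. Choosing $C$ large enough that $(C-3)^2 \ge 7c$ forces $g(1) = (C-3)^2/c \ge 7 > 0$ and $g'(T) = (C-3)^2/c - 7/T \ge 0$ on $[1,\infty)$, so $g \ge 0$ throughout; the choice $C = 3 + \sqrt{7c}$ works. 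This establishes \eqref{eq:sparse_lemma} for all $T > 0$ with a single universal constant $C$. The only genuinely delicate point is this final uniformity — confirming that one constant $C$ simultaneously handles the regime where the threshold is dominated by $\sqrt{\sigma^2 n}$ and the regime where it is dominated by $\sqrt T$, across every $T$ — which the elementary monotonicity argument for $g$ resolves; the other item to pin down precisely is that the cited \citet[Corollary 3.11]{bandeira2016sharp} indeed supplies the tail bound $n\,e^{-t^2/(\tilde c_\epsilon \sigma_*^2)}$ rather than merely an in-expectation estimate.
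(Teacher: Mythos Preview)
Your proposal is correct and follows essentially the same route as the paper: both invoke \citet[Corollary~3.11]{bandeira2016sharp} to obtain a sub-Gaussian tail of the form $\pr\{\|\bs A\| \ge 3\sqrt{\sigma^2 n} + t\} \le n\,e^{-t^2/c}$ and then calibrate $t$ against $T$ to convert it into the polynomial tail $nT^{-7}$. Your treatment is in fact more careful than the paper's (you handle the regime $T\le 1$ explicitly, track that the constant depends on $\epsilon$ rather than on $t$, and verify the uniformity of $C$ via the monotonicity of $g$), but the substance of the argument is identical.
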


\begin{proof}
\newcommand{\ct}{\tilde c}
	{\bf (of Lemma~\ref{lem: tropp tech})}
	Corollary 3.12 of~\cite{bandeira2016sharp} establishes that for each $t>0$
	there exists $\tilde c = \tilde c(t) > 0$ such that
	\begin{equation} \label{eq:BVHtemp}
		\pr \bra{ \|\bs{A}\| \ge 3 \sqrt{\sigma^2 n} + t } \le n e^{-\ct t^2}.
	\end{equation}
	Here we have substituted the upper bound $\sigma^2 n \ge \tilde \sigma^2$, in place of $\tilde \sigma:= \max_i \sum_{j} \E[X_{ij}^2]$ in the original statement of Corollary~3.12.
	Let $t = (C-3) \max\{ \sqrt{\sigma^2 n}, \sqrt{\log T} \}$ where $C$ is chosen large enough that $\ct 	(C-3)^2 > 7$.
	In this case, \eqref{eq:BVHtemp} specializes to
	\[
		\pr \bra{\|\bs{A}\| \ge C \max\{ \sqrt{\sigma^2 n}, \sqrt{\log n}\} }
		\le n e^{-7 \max\{\sigma^2 n, \log n\}} \le n e^{-7 \log n} = n T^{-7}.
	\]
	This completes the proof.
\end{proof}

Before we continue with the derivation of the desired bound on \(\|\St_1\|\), we note that
the entries $[\St_1]_{ij}$ of $\St_1$ all satisfy $|[\St_1]_{ij}| \leq 1$ if we assume that \(w_{ij} \in [0,1]\)
for all \(i,j\); note that an identical argument establishes the result if we make the weaker assumption that
the entries of \(\W\) are bounded with high probability.
On the other hand, note that the
entries of $\St_1$ are not identically distributed (but are independent) since each $w_{ij}$ is sampled according to $\Omega_{qs}$, where $i \in C_q$, $j\in C_s$. However, we know that $\sigma^2_{qs} \le \tilde\sigma^2$ by our definition of $\tilde\sigma^2$. Moreover, $\E[{[\St_1]}_{ij}] = \E[\mu_{qs} - w_{ij}] = 0$.
Thus, we can apply Lemma~\ref{lem: tropp tech} to place a bound on $\|\St_1\|$.
Doing so establishes that~\eqref{eq: S1 bound} holds w.h.p.
\end{proof}

\section{Proof of  Lemma~\ref{lem: S2}} 
\label{A_S2_bound}

We conclude with the following proof of Lemma~\ref{lem: S2}.

\begin{proof}
Note that $\|\St_2 \| \le  \|\bs \lambda - \E[\bs \lambda] \| \sqrt{r_{k+1}}.$
Thus, it remains to bound $\|\bs \lambda - \E[\bs \lambda] \|$.
\newcommand{\bl}{\bs{\lambda}}
To do so, fix $q \in \{1,2,\dots, k\}$. Recall that
\[
	\bl_{C_q} - \E[\bl_{C_q}] = \frac{1}{r_q} ( \W_{C_q, C_q} \e - \mu_{qq} r_q \e )
		- \frac{1}{r_q^2} (\e^T \W_{C_q, C_q} \e - \mu_{qq} r_q^2) \e.
\]
Applying \eqref{eq:sparse_lemma} with $T = n$ establishes that
\begin{align*}
	\| \W_{C_q, C_q} \e - \mu_{qq} r_q \e \| &\le  \|\W_{C_q, C_q}  - \mu_{qq} \e\e^T \| \|\e\| \\
	& \le C \sqrt{r_q} \max\{ \sigma_{qq} \sqrt{ r_q}, \sqrt{\log n} \},
\end{align*}
w.h.p.
On the other hand, Bernstein's inequality establishes that
\[
	|  \e^T \W_{C_q, C_q} \e - \mu_{qq} r_q^2 | \le 6 \max \bra{ \sqrt{\sigma_{qq}^2 r_q \log n}, \log n },
\]
w.h.p.
Combining these two inequalities using the triangle inequality establishes that
\begin{align*}
	\| \bl_{C_q} - \E[\bl_{C_q}] \| \le &C \max \bra{ \sigma_{qq}, \sqrt{\frac{\log n}{r_q } } }
		+ 6 \max \bra { \sqrt{ \frac{\sigma_{qq}^2 \log n}{r_q^2}}, \frac{\log n}{r_q^{3/2} } } \\
		&= O \rbra{ \max \bra{ \sigma_{qq}, \sqrt{ \frac{ \log n}{\hat r} } } },
\end{align*}
w.h.p.
Finally, applying the union bound over all choices of $q$ shows that
\[
	\| \bl - \E [ \bl ] \|^2 = \sum_{q=1}^k \| \bl_{C_q} - \E[\bl_{C_q}] \|^2
		= O \rbra{  k \max \bra{ \sigma_{qq}^2, \frac{\log n}{\hat r} } },
\]
w.h.p.
This establishes that
\[
	\|\St_2\|^2 = O\rbra{k r_{k+1} \max \bra{ \sigma_{qq}^2, \frac{\log n}{\hat r} } },
\]
w.h.p., as required.
\end{proof}


\bibliography{SWKDC}

\end{document}